\theoremstyle{plain}
\newtheorem{thm}{Theorem}
\newtheorem{prop}[thm]{Proposition}
\newtheorem{lem}[thm]{Lemma}
\newtheorem{cor}[thm]{Corollary}
\theoremstyle{definition}
\newtheorem{defn}[thm]{Definition}
\newtheorem{exmp}[thm]{Example}
\newtheorem{remark}[thm]{Remark}
\begin{document}

\title[Presentations and representations of surface singular braids]{Presentations and representations\\ of surface singular braid monoids}
\author[M. Jab{\l}onowski]{Micha{\l} Jab{\l}onowski}
\dedicatory{Dedicated to Ma{\l}gorzata}
\address{Institute of Mathematics, University of Warsaw, 02-097 Warsaw, Poland,\\ and\\ Institute of Mathematics, Faculty of Mathematics, Physics and Informatics,\\ University of Gda\'nsk, 80-308 Gda\'nsk, Poland}
\email{michal.jablonowski@gmail.com}
\keywords{knotted surface, marked graph diagram, singular braid monoid, surface-link}
\subjclass[2010]{57Q45 (primary) and 20M30 (secondary)} 

\thanks{The author's research was partially supported by grant BW-UG 538-5100-B297-16 for young researchers and 
Warsaw Center of Mathematics and Computer Science.}

\begin{abstract}
The surface singular braid monoid corresponds to marked graph diagrams of knotted surfaces in braid form. In a quest to resolve linearity problem for this monoid, we will show that if it is defined on at least two or at least three strands, then its two or respectively three dimensional representations are not faithful. We will also derive new presentations for the surface singular braid monoid, one with reduced the number of defining relations, and the other with reduced the number of its singular generators. We include surface singular braid formulations of all knotted surfaces in Yoshikawa's table.

\end{abstract}

\maketitle

\section{Introduction}

The well known Artin representation of the braid group $B_n$ may be used to calculate the group of a knot. Applying Fox' free differential calculus to this representation, we can derive the Burau representation. Its irreducible part may be used to calculate the Alexander polynomial of a knot. In \cite{Gem01}, B. Gemein extend the Artin and the Burau representation to a representation of the Baez-Birman singular braid monoid $SB_n$. A monoid is said to be linear if it is isomorphic to a submonoid of matrices $M_n(K)$ for some natural number $n$ and some field $K$. In \cite{DasGem00}, O. T. Dasbach and B. Gemein showed the faithfulness of the two dimensional extended Burau representation of $SB_3$, therefore this monoid is linear. 

It is natural then to search for a faithful representation of the surface singular braid monoid $SSB_n$ defined in \cite{Jab13}, where the author classified knotted surfaces in $\mathbb{R}^4$ that have surface singular braid index equal to one or two, and also showed that there exist infinitely many surface-link types that are closures of elements from $SSB_3$. We will show in this paper that any representation of $SSB_n$, for any $n\geqslant 3$, to the multiplicative monoid of all $2\times 2$ or $3\times 3$ matrices with entries in a given field, is not faithful. We will also derive new presentations for the surface singular braid monoid, one with reduced the number of defining relations, and the other with reduced the number of its defining non-classical generators. 

\section{Marked graph diagrams}

An embedding (or its image) of a closed (i.e., compact, without boundary) surface into $\mathbb{R}^4$ is called a \emph{knotted surface} (or \emph{surface-link}). Two knotted surfaces are \emph{equivalent} (or have the same \emph{type}) if there exists an orientation preserving homeomorphism of the four-space $\mathbb{R}^4$ to itself (or equivalently auto-homeomorphism of the four-sphere $\mathbb{S}^4$), mapping one of those surfaces onto the other. We will work in the standard smooth category. Let $\mathbb{R}^3_t$ denote $\mathbb{R}^3\times\{t\}$ for $t\in\mathbb{R}$. 

It is well known (\cite{Lom81}, \cite{KSS82}, \cite{Kam89}) that for any knotted surface $F$, there exists a surface-link $F'$ satisfying the following: $F'$ is equivalent to $F$ and has only finitely many Morse's critical points, all maximal points of $F'$ lie in $\mathbb{R}^3_1$, all minimal points of $F'$ lie in $\mathbb{R}^3_{-1}$, all saddle points of $F'$ lie in $\mathbb{R}^3_0$.

The zero section $\mathbb{R}^3_0\cap F'$ of the surface F' gives us then a 4-valent graph. We assign to each vertex a \emph{marker} that informs us about one of the two possible types of saddle points (see Fig.\;\ref{pic03}) depending on the shape of $\mathbb{R}^3_{-\epsilon}\cap F'$ or $\mathbb{R}^3_{\epsilon}\cap F'$ for a small real number $\epsilon>0$. The resulting graph is called a \emph{marked graph}.

Making now a projection in general position of this graph to $\mathbb{R}^2$ and assigning types of classical crossings between regular arcs, we obtain a \emph{marked graph diagram}. For a marked graph diagram $D$, we denote by $L_+(D)$ and $L_-(D)$ the diagrams obtained from $D$ by smoothing every vertex as presented in Fig.\;\ref{pic03} for $+\epsilon$ and $-\epsilon$, respectively.
\begin{figure}[ht]
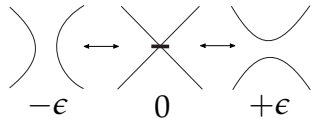

\begin{center}
\begin{lpic}[b(0.5cm)]{./pic03(4cm)}
	\lbl[t]{15,-2;$-\epsilon$}
  \lbl[t]{60,-2;$0$}
  \lbl[t]{102,-2;$+\epsilon$}
	\end{lpic}
		\caption{Rules for smoothing a marker.\label{pic03}}
\end{center}
\end{figure}
\begin{thm}[\cite{Swe01}, \cite{KeaKur08}, \cite{JKL15}]
Any two marked graph diagrams representing the same type of knotted surface are related by a finite sequence of Yoshikawa local moves presented in Fig.\;\ref{pic04} (and an isotopy of the diagram in $\mathbb{R}^2$).
\end{thm}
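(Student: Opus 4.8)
The plan is to derive the statement from the hyperbolic-split normal form quoted above together with a Cerf-theoretic analysis of ambient isotopies, so that a continuous equivalence of two surface-links is chopped into finitely many local moves of the associated marked graph diagrams.

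I would first fix the correspondence and then make the isotopy generic. Given marked graph diagrams $D_0$ and $D_1$ presenting equivalent knotted surfaces, use the normal form (\cite{Lom81, KSS82, Kam89}) to choose representatives $F_0,F_1\subset\mathbb{R}^4$ with all maxima in $\mathbb{R}^3_1$, all minima in $\mathbb{R}^3_{-1}$ and all saddles in $\mathbb{R}^3_0$, whose zero sections project to $D_0$ and $D_1$, and choose an ambient isotopy $\{h_s\}_{s\in[0,1]}$ with $h_0=\mathrm{id}$ and $h_1(F_0)=F_1$; write $F_s=h_s(F_0)$. By transversality and Cerf theory, perturb $\{h_s\}$ rel endpoints so that for all but finitely many $s$ the last coordinate restricted to $F_s$ is Morse with distinct critical values and $\mathbb{R}^3_0\cap F_s$ projects to $\mathbb{R}^2$ as a generic immersed $4$-valent graph, while at each exceptional parameter exactly one codimension-one degeneracy occurs; by a further small isotopy push any newly born extrema to $\mathbb{R}^3_{\pm1}$ and keep all saddle values near $0$, so that a well-defined marked graph diagram $D_s$ is attached to each non-exceptional $F_s$ and changes only when an exceptional parameter is crossed.

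Then I would classify the exceptional events and match each to a Yoshikawa move from Fig.\;\ref{pic04}. Planar-projection degeneracies of $\mathbb{R}^3_0\cap F_s$ away from the marked vertices are the three classical Reidemeister degeneracies (cusp, tangency, triple point) and give the Reidemeister-type moves; those meeting a vertex (a strand crossing a vertex, a vertex passing a crossing or a kink) give the vertex-slide moves; swapping the nearly-equal heights of two saddles reorders the corresponding bands and gives the band-reordering move. The remaining events are the birth or death of a cancelling saddle--extremum pair near level $0$ and the collision of a saddle value with an adjacent extremal value or with another saddle; these should be exactly the Yoshikawa moves that insert or delete a marked vertex together with a cap. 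Finally one checks that the residual ambiguity --- the choice of how to cap the trivial links $L_+(D)$ and $L_-(D)$ by disks --- is already generated by the moves just enumerated.

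The main obstacle, I expect, is precisely that last family of events: one must write down the correct local model in $\mathbb{R}^4$ for a saddle meeting an extremum or another saddle, read off how the marked graph diagram changes across it, and check that this change is a single Yoshikawa capping move modulo the moves already established --- and, simultaneously, that the normalization ``all saddles at level $0$, all extrema at $\mathbb{R}^3_{\pm1}$, distinct critical values'' survives births and deaths without generating degeneracies beyond those on the list. If this singularity bookkeeping becomes unwieldy, I would instead start from Roseman's completeness theorem for broken surface diagrams: isotope such a diagram to be compatible with a hyperbolic splitting, and translate each Roseman move into an explicit word in the Yoshikawa moves on the associated marked graph diagram, trading the Cerf-theoretic analysis for the combinatorics of that dictionary.
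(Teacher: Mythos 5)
This theorem is not proved in the paper at all; it is imported from the cited references (Swenton, Kearton--Kurlin, Joung--Kim--Lee), so there is no in-paper argument to compare against. Your outline does follow the strategy of those sources --- normal form plus a Cerf-theoretic analysis of a generic ambient isotopy, with codimension-one events matched to local moves --- but as it stands it is a program with the two genuinely hard steps deferred rather than a proof.

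The first gap is your final ``one checks'' about the capping of $L_+(D)$ and $L_-(D)$. The issue is not that the ambiguity is ``generated by the moves just enumerated''; the issue is to prove there is no ambiguity at all, i.e.\ that a trivial link in $\mathbb{R}^3$ bounds a trivial disk system in the half-space that is unique up to ambient isotopy rel boundary. Without this, the assignment $D\mapsto F$ is not well defined and your family $D_s$ need not close up at $s=1$: the surface $F_1$ you reach could be capped differently from the one presented by $D_1$, and no amount of tracking codimension-one events in the middle level repairs that. This uniqueness is a substantive theorem (essentially Kawauchi--Shibuya--Suzuki; its careful treatment is precisely what distinguishes the complete proofs from earlier attempts), and it must be invoked or proved, not checked against the move list. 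The second gap is the classification itself: asserting that the exceptional parameters fall into the families you name, and that each family induces exactly one of $\Omega_1$--$\Omega_8$ modulo the others, is the entire content of the theorem. In particular, re-normalizing after a birth/death of a saddle--extremum pair (pushing the new extremum back to level $\pm1$ while keeping all saddles at level $0$) is itself a non-generic deformation whose effect on the diagram must be shown to factor through the listed moves, and the saddle--saddle and saddle--extremum interactions must be shown to yield $\Omega_6,\Omega_6',\Omega_7,\Omega_8$ and nothing else. Your fallback via Roseman moves faces the same two obstructions in different clothing. So the approach is the right one, but the proposal identifies the hard points without discharging them.
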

\begin{figure}[ht]
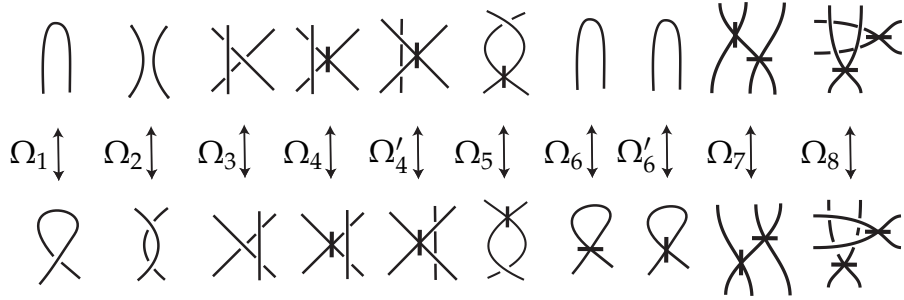

\begin{center}
\begin{lpic}[l(0.2cm)]{./pic04(11.5cm)}
	\lbl[r]{3,33;$\Omega_1$}
  \lbl[r]{25,33;$\Omega_2$}
	\lbl[r]{47,33;$\Omega_3$}
	\lbl[r]{67,33;$\Omega_4$}
	\lbl[r]{87,33;$\Omega_4'$}
	\lbl[r]{107,33;$\Omega_5$}
	\lbl[r]{128,33;$\Omega_6$}
	\lbl[r]{145,33;$\Omega_6'$}
	\lbl[r]{166,33;$\Omega_7$}
	\lbl[r]{188,33;$\Omega_8$}
	\end{lpic}
\caption{A generating set of Yoshikawa moves (compare \cite{Yos94}).\label{pic04}}
\end{center}
\end{figure}
\section{Surface singular braid monoid}

We can present every marked diagram of a surface-link in a \emph{braid form} defined as the geometric closure of a singular braid with markers. We have the monoid $SSB_m$ that corresponds to marked graph diagrams in braid form on $m$ strands. For $m=1$ this monoid is trivial with one element, let us assume that $m>1$. Elements of $SSB_m$, called \emph{surface singular braids}, are generated by four types of elements $a_i, b_i, c_i, c_i^{-1}$ for $i=1, \ldots, m-1$, where the correspondence of types of crossings and types of markers between $i$-th and $i+1$-th strand (in the horizontal position, numbered from the top to the bottom) is presented in Fig.\;\ref{pic05}.
\begin{figure}[ht]
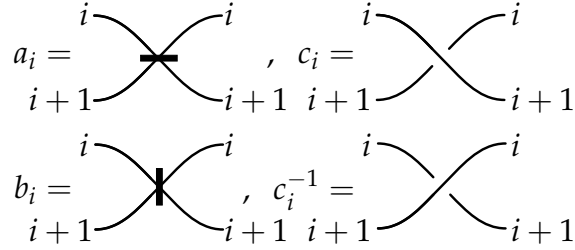

\begin{center}
\begin{lpic}[]{./pic05(5.5cm)}
	\lbl[r]{-5,42;$a_i=$}
	\lbl[r]{-5,10;$b_i=$}
	\lbl[r]{64,42;$,\;\;c_i=$}
	\lbl[r]{64,10;$,\;\;c_i^{-1}=$}
	\lbl[r]{-1,53;$i$}
	\lbl[r]{-1,21;$i$}
	\lbl[r]{0,32;$i+1$}
	\lbl[r]{0,0;$i+1$}
	\lbl[r]{68,53;$i$}
	\lbl[r]{68,21;$i$}
	\lbl[r]{68,32;$i+1$}
	\lbl[r]{68,0;$i+1$}
	\lbl[l]{32,53;$i$}
	\lbl[l]{32,21;$i$}
	\lbl[l]{32,32;$i+1$}
	\lbl[l]{32,0;$i+1$}
	\lbl[l]{102,53;$i$}
	\lbl[l]{102,21;$i$}
	\lbl[l]{102,32;$i+1$}
	\lbl[l]{102,0;$i+1$}
	\end{lpic}
	\caption{The correspondence of monoid generators.\label{pic05}}
\end{center}
\end{figure}
\begin{defn}[\cite{Jab13}]
Let $m\in \mathbb{Z}$, $m>1$ and $i,k,n\in\{1, \ldots, m-1\}$ such that $|k-i|=1$, moreover let $x_i,y_i\in\{a_i,b_i,c_i, c_i^{-1}\}$.
Monoid $SSB_m$ is subject to the following relations.
\begin{enumerate}
\item[(A1)] $c_ic_i^{-1}=1$,
\item[(A2)] $x_iy_n=y_nx_i$ for $n\not=k$,
\item[(A3)] $x_ic_kc_i=c_kc_ix_k$,
\item[(A4)] $x_ic_k^{-1}c_i^{-1}=c_k^{-1}c_i^{-1}x_k$,
\item[(A5)] $a_ib_k=b_ka_i$,
\item[(A6)] $a_ib_{i-2}(c_{i-1}c_{i-2}c_ic_{i-1})^2=a_ib_{i-2}$ for $i>2$,
\item[(A7)] $b_ia_{i-2}(c_{i-1}c_{i-2}c_ic_{i-1})^2=b_ia_{i-2}$ for $i>2$,
\item[(A8)] $a_i^2=a_i$,
\item[(A9)] $b_i^2=b_i$,
\item[(A10)] $a_ib_ic_i^2=a_ib_i$,
\item[(A11)] $a_ib_k(c_ic_kc_i)^2=a_ib_k$.
\end{enumerate}
\end{defn}
We will indicate our closure of a marked graph diagram in a braid form by adding square brackets around its words and adding lower index after it, saying how many strands we are joining. Let us further denote by $CSB_m$ a subset of $SSB_m$ containing only those elements $x$ such that $L_+([x]_m)$ and $L_-([x]_m)$ are diagrams of trivial classical links. We define the following additional relations on closed braids.
\begin{enumerate}
\item[(C1)] $\left[x_iS_n\right]_n=\left[S_nx_i\right]_n$ for $n\in\mathbb{Z}_+$ and $i<n$ and $x_iS_n\in CSB_n$,
\item[(C2)] $\left[S_n\right]_n=\left[S_nx_n\right]_{n+1}$ for $n\in\mathbb{Z}_+$ and $S_n\in CSB_n$.
\end{enumerate}
\begin{thm}[\cite{Jab13}]\label{thm4}
Making change in a closed braid word formulation of a knotted surface by using one of relations from (A1)-(A11) or (C1)-(C2), we receive a formula of a knotted surface of the same type.
\end{thm}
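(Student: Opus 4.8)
The plan is to use the fact that each Yoshikawa move $\Omega_1,\ldots,\Omega_8$ of Figure \ref{pic04} preserves the type of the represented knotted surface (the converse --- that these moves, together with planar isotopy, generate all type-preserving equivalences --- being the theorem recalled above). Hence it suffices to check that each single application of a relation (A1)--(A11) inside a closed braid word, and each single application of a closure relation (C1)--(C2), changes the associated marked graph diagram only by a finite sequence of Yoshikawa moves and planar isotopies. Every relation (A1)--(A11) is an equality of two braid-form tangles that agree outside a small disk meeting at most four strands, so this is a purely local verification carried out by drawing the corresponding local pictures; the closure relations are the only ones touching the closing arcs and are handled at the end.

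The relations of classical or single-vertex-slide type are realized by short sequences. Relation (A1) is a Reidemeister II move, i.e.\ $\Omega_2$. Relation (A2) is a planar isotopy when the two tangles $x_i$ and $y_n$ occupy disjoint pairs of strands ($|n-i|\geq 2$), and otherwise is the sliding of $x_i$ past $y_n$ on overlapping strands, realized by the vertex/crossing moves $\Omega_4,\Omega_4'$ and the vertex/vertex move $\Omega_5$. Relations (A3) and (A4) are the mixed braid relations: a crossing or a marked vertex on strands $i,i+1$ is dragged across the pair of crossings $c_kc_i$ (respectively $c_k^{-1}c_i^{-1}$) to position $k,k+1$; for $x_i=c_i^{\pm 1}$ this is the Reidemeister III move $\Omega_3$, and for $x_i$ a marked vertex it is $\Omega_3$ together with $\Omega_4,\Omega_4'$. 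Relation (A5) exchanges the order of an $a$-type and a $b$-type marked vertex lying on adjacent, strand-overlapping pairs; after sliding them along their common strand with $\Omega_4,\Omega_4'$, the swap is one application of $\Omega_5$ (or $\Omega_7$). Relations (A8) and (A9) are idempotency of a single marked vertex: stacking two copies of $a_i$ (respectively $b_i$) creates a small bigon bounded by two marked vertices which is erased by $\Omega_6$ (respectively $\Omega_6'$), and one checks directly that $L_+$ and $L_-$ of the two sides differ at most by a split unknotted component, so that both sides still represent a genuine closed surface of the same Euler characteristic.

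The twist-cancellation relations carry the real content. In (A10) and (A11) a full twist --- $c_i^2$ on one pair of strands in (A10), the four-strand element $(c_ic_kc_i)^2$ on two adjacent pairs in (A11) --- is to be deleted in the presence of a matching pair $a_i,b_i$ (respectively $a_i,b_k$) of opposite-type marked vertices. Those two vertices produce a local maximum and a local minimum in the zero section, so the twisted band running between them bounds a disk and can be combed straight: one first detaches the twist from the vertices using $\Omega_4,\Omega_4'$ and then unwinds it with $\Omega_2$ and $\Omega_6$ (together with $\Omega_3$ and $\Omega_7$ in the four-strand case). Relations (A6) and (A7) are the genuinely four-strand instance of the same phenomenon: $(c_{i-1}c_{i-2}c_ic_{i-1})^2$ is a full twist of the four strands $i-2,i-1,i,i+1$, and it is killed by the two capping vertices $a_i$ and $b_{i-2}$ at its ends; the realization runs the above argument on four strands and additionally needs the higher moves $\Omega_6,\Omega_7,\Omega_8$. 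Producing this last isotopy explicitly, while keeping track of every marker and of all classical crossing types, is the main obstacle of the proof; everything else is routine picture-chasing.

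Finally the closure relations. Relation (C1) is cyclic permutation of a closed braid word: pushing the tangle $x_i$ once around the closing arcs is a planar isotopy of the closed diagram, except that each time the vertex of $x_i$ crosses a closing strand the passage is an $\Omega_4$ or $\Omega_4'$ move, and the hypothesis $x_iS_n\in CSB_n$ guarantees that $L_+$ and $L_-$ remain trivial links throughout, so no obstruction appears. Relation (C2) is a Markov-type stabilization adjoining one new strand carrying the single generator $x_n$: for $x_n=c_n^{\pm 1}$ it is ordinary braid stabilization, i.e.\ one $\Omega_1$ move; for $x_n=a_n$ or $b_n$ the new strand carries one marked vertex that caps off a trivial finger and is removed by $\Omega_1$ together with $\Omega_6$ and $\Omega_7$, the hypothesis $S_n\in CSB_n$ ensuring that the original closure is already a genuine closed surface, so that the adjoined piece is an unknotted, type-preserving modification. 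Composing the per-relation move sequences obtained in these four steps proves the theorem.
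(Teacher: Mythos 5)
Your overall strategy --- reduce each relation (A1)--(A11), (C1)--(C2) to a finite sequence of Yoshikawa moves and planar isotopies, and invoke the (easy) fact that each move in Fig.~\ref{pic04} preserves the surface type --- is the right one, and it is how this result is established in the source \cite{Jab13}; note that the present paper only quotes the theorem and contains no proof of it to compare against. However, as written your argument has a genuine gap rather than just missing detail: for the relations that carry the actual topological content, namely the twist-cancellation relations (A10), (A11) and especially (A6), (A7), you do not produce the move sequences (or any substitute ambient-isotopy argument); you explicitly defer this as ``the main obstacle of the proof.'' Since these are precisely the relations whose validity is not visible from two-dimensional picture-chasing --- they encode the fact that a full twist in a band attached to a capping disk can be undone in $\mathbb{R}^4$ --- the proof is not complete without either an explicit chain of $\Omega$-moves or a direct movie/isotopy argument (e.g.\ a Zeeman-type rotation of the capping disk) for each of them.

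A second, smaller but real, error is your treatment of (A8) and (A9): observing that $L_{\pm}$ of the two sides differ by a split unknot and that the resulting closed surfaces have the same Euler characteristic does not show they have the same type --- Euler characteristic is far from a complete invariant of surface-links, and the whole point of the theorem is ambient isotopy, not just that both sides are closed surfaces. What is needed here is to identify the bigon bounded by the two like-marked vertices as exactly the local configuration removed by the appropriate move ($\Omega_6/\Omega_6'$ after capping, or $\Omega_7$), or to argue directly that the extra saddle pair creates a trivially capped finger that is absorbed by an isotopy. Finally, a few of your move assignments should be checked against the actual pictures in Fig.~\ref{pic04} (for instance, the case $n=i$ of (A2), i.e.\ $a_ic_i=c_ia_i$, is not a ``slide on overlapping strands'' in your sense, and the cyclic permutation in (C1) is an isotopy of the closed diagram in the annulus and needs no $\Omega_4$ moves at all); these are fixable, but in a proof whose entire content is the correctness of such local verifications, they cannot be left at the level of plausibility.
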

\begin{prop}\label{prop2}
The monoid $SSB_m$ for $m\in \mathbb{Z}$ and $m>1$ is generated by $a_i,b_i,c_i,c_i^{-1}$ for $i,j\in\{1,\ldots,m-1\}$, $x_i,y_i\in\{a_i,b_i,c_i, c_i^{-1}\}$ and is subject to the following relations:
\begin{align}
c_ic_i^{-1}&=1=c_i^{-1}c_i,& \tag{R1}\\
x_iy_j&=y_jx_i& \text{\;\; for } |i-j|>1, \tag{R2}\\
a_ic_i&=c_ia_i,& \tag{R3}\\
b_ic_i&=c_ib_i,& \tag{R4}\\
c_{i+1}c_ic_{i+1}&=c_ic_{i+1}c_i& \text{\;\; for } i<m-1, \tag{R5}\\
a_{i+1}c_ic_{i+1}&=c_ic_{i+1}a_i& \text{\;\; for } i<m-1, \tag{R6}\\
b_{i+1}c_ic_{i+1}&=c_ic_{i+1}b_i& \text{\;\; for } i<m-1, \tag{R7}\\
a_{i}c_{i+1}c_{i}&=c_{i+1}c_{i}a_{i+1}& \text{\;\; for } i<m-1, \tag{R8}\\
b_{i}c_{i+1}c_{i}&=c_{i+1}c_{i}b_{i+1}& \text{\;\; for } i<m-1, \tag{R9}\\
a_ib_{i+1}&=b_{i+1}a_i& \text{\;\; for } i<m-1, \tag{R10}\\
a_ib_i&=b_ia_i,& \tag{R11}\\
a_i^2&=a_i,& \tag{R12}\\
b_i^2&=b_i,& \tag{R13}\\
a_ib_ic_i^2&=a_ib_i,& \tag{R14}\\
a_ib_{i+1}(c_ic_{i+1}c_i)^2&=a_ib_{i+1}& \text{\;\; for } i<m-1, \tag{R15}\\
a_ib_{i+2}(c_{i+1}c_ic_{i+2}c_{i+1})^2&=a_ib_{i+2}& \text{\;\; for } i<m-2. \tag{R16}
\end{align}
\end{prop}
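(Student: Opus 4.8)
The plan is a Tietze-type argument: since the two presentations share the generating set $\{a_i,b_i,c_i,c_i^{-1}\}$, it suffices to check that every relation in one list is a consequence of the other list. Then the identity on generators induces mutually inverse monoid homomorphisms between the two presented monoids, and since $SSB_m$ is by definition the monoid presented by (A1)--(A11), the list (R1)--(R16) is an alternative presentation of it. I expect the implication (A1)--(A11) $\Rightarrow$ (R1)--(R16) to be routine and the reverse implication to contain all the work.

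For the routine direction, I would first record the standing consequence of (A2) that generators with equal indices commute and generators with indices at distance $\geq 2$ commute; this yields (R2)--(R4), (R11) immediately, and with (A1) the second equality of (R1). The remaining relations are then essentially copies of (A1)--(A11): (R5)--(R9) are the cases $x_i\in\{c_i,a_i,b_i\}$ of (A3) with $k=i\pm 1$ (after a shift of indices); (R10) is (A5) with $k=i+1$; (R12)--(R14) are (A8)--(A10); (R15) is (A11) with $k=i+1$; and (R16) is (A7) after replacing $i$ by $i+2$ and commuting $a_i$ past $b_{i+2}$.

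For the reverse implication I would separate a routine part from a substantive one. Routinely: (A1) sits inside (R1); the equal-index part of (A2) comes from (R3), (R4), (R11), (R1) (the cases involving $c_i^{-1}$ by conjugating (R3), (R4) with $c_i^{-1}$) and the distant part is (R2); (R5) gives the braid relation in both index shifts, hence (A3) for $x_i=c_i$, while (A3) for $x_i\in\{a_i,b_i\}$ is (R6)--(R9) and for $x_i=c_i^{-1}$ it follows from the braid relation by multiplying on both sides by suitable $c_j^{\pm 1}$; (A4) is then obtained from (A3) applied with the two indices interchanged, using two-sided invertibility of the $c_i$; and (A8)--(A11) with $k=i+1$ are (R12)--(R15), with (A7) the relabelling $i\mapsto i+2$ of (R16) together with distant commutativity. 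The substantive part is recovering the ``mirror'' relations absent from the $R$-list --- namely (A5) and (A11) for $k=i-1$, and (A6) --- by a transport argument. From (R6), (R8) one reads off $a_{j+1}=(c_{j+1}c_j)^{-1}a_j(c_{j+1}c_j)$, hence $a_{j+2}=W^{-1}a_jW$ with $W=c_{j+1}c_jc_{j+2}c_{j+1}$, and from (R7), (R9) the same for $b$; the key point is that these conjugators carry the complementary generator the other way, e.g. $Wb_j=b_{j+2}W$ and, writing $V=c_jc_{j+1}c_j$, $Va_jV^{-1}=a_{j+1}$, $Vb_{j+1}V^{-1}=b_j$ (each a two-line computation from (R3)--(R9) and the braid relation). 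Substituting these into (R10), (R15), (R16) collapses the relevant words; for example $a_{j+1}b_jV^2=\cdots=Va_jb_{j+1}V=Va_jb_{j+1}V^{-1}=(Va_jV^{-1})(Vb_{j+1}V^{-1})=a_{j+1}b_j$, which after relabelling and a braid rewrite of the $c$-word is (A11) for $k=i-1$; the same chain with $W$ for $V$ and (R16) for (R15) gives (A6), and deleting the $c$-word gives (A5) for $k=i-1$.

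The hard part will be exactly this transport step: the $R$-list has been pruned to keep only one representative of each mirror pair of index-shifting relations, and one must verify by hand that the pruned list still forces the missing halves. The difficulty is bookkeeping rather than depth; I would guard against error by isolating, once and for all, the conjugation formulas for $a_j$, $b_j$, $c_j$ under the invertible words $c_{j+1}c_j$, $c_jc_{j+1}c_j$ and $c_{j+1}c_jc_{j+2}c_{j+1}$, and only then running the short substitution chains. I would also dispose of the small cases $m=2,3$ on the side, where (R2), (R16) and (A6)--(A7) are vacuous and the transport argument has fewer strands to work with.
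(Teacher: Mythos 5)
Your proposal is correct and follows essentially the same route as the paper: both reduce the problem to deriving the three ``mirror'' relations absent from the $R$-list (the reversed forms of (R10), (R15) and the (A6)-side of (R16)), which the paper does by inline rewriting chains and you do equivalently via the conjugation formulas $a_{j+1}=Va_jV^{-1}$, $b_j=Vb_{j+1}V^{-1}$ and their analogues for $W$. The remaining steps (matching up the other relations and handling the $c_i^{-1}$ cases of (A2)--(A4)) are treated as routine in both, the paper by citing classical singular braid theory and you by direct inversion, which is an immaterial difference.
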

\begin{proof}
Some relations from (A2)-(A4) that includes $c_i^{-1}$ are known, from classical singular braid theory, to follow from (R1)-(R9). The remaining relations are either the same or derived as follows.
\begin{align}
\noindent
\begin{aligned}
\hspace*{-1cm}b_ia_{i+1}\overset{\scriptscriptstyle\text{(R1)}}{=}&b_ia_{i+1}c_{i}c_{i+1}c_{i+1}^{-1}c_{i}^{-1}\overset{\scriptscriptstyle\text{(R6)}}{=}b_ic_{i}c_{i+1}a_{i}c_{i+1}^{-1}c_{i}^{-1}\overset{\scriptscriptstyle\text{(R1),(R3),(R4)}}{=}\\
&c_ib_ic_{i+1}c_ia_ic_i^{-1}c_{i+1}^{-1}c_i^{-1}\overset{\scriptscriptstyle\text{(R9)}}{=}c_ic_{i+1}c_ib_{i+1}a_ic_i^{-1}c_{i+1}^{-1}c_i^{-1}\overset{\scriptscriptstyle\text{(R10)}}{=}\\
&c_ic_{i+1}c_ia_ib_{i+1}c_i^{-1}c_{i+1}^{-1}c_i^{-1}\overset{\scriptscriptstyle\text{(R3)}}{=}c_ic_{i+1}a_ic_ib_{i+1}c_i^{-1}c_{i+1}^{-1}c_i^{-1}\overset{\scriptscriptstyle\text{(R6)}}{=}\\
&a_{i+1}c_ic_{i+1}c_ib_{i+1}c_i^{-1}c_{i+1}^{-1}c_i^{-1}\overset{\scriptscriptstyle\text{(R4),(R9)}}{=}a_{i+1}b_ic_ic_{i+1}c_ic_i^{-1}c_{i+1}^{-1}c_i^{-1}\overset{\scriptscriptstyle\text{(R1)}}{=}a_{i+1}b_i,
\end{aligned}
\end{align}
\begin{align}
\begin{aligned}
\hspace*{-1cm}a_{i+1}b_i(c_ic_{i+1}c_i)^2\overset{\scriptscriptstyle\text{(R5)}}{=}&a_{i+1}b_ic_{i+1}c_{i}c_{i+1}c_{i}c_{i+1}c_{i}\overset{\scriptscriptstyle\text{(R3),(R4),(R9)}}{=}c_{i+1}a_{i+1}c_{i}c_{i+1}b_{i+1}c_{i}c_{i+1}c_{i}\\
&\overset{\scriptscriptstyle\text{(R1),(R6)}}{=}c_{i+1}c_{i}c_{i+1}a_{i}b_{i+1}(c_{i}c_{i+1}c_{i})^2(c_{i}c_{i+1}c_{i})^{-1}\overset{\scriptscriptstyle\text{(R15)}}{=}\\
&c_{i+1}c_{i}c_{i+1}a_{i}b_{i+1}(c_{i}c_{i+1}c_{i})^{-1}\overset{\scriptscriptstyle\text{(R3)-(R6),(R9)}}{=}\\
&a_{i+1}b_{i}c_{i}c_{i+1}c_{i}c_{i}^{-1}c_{i+1}^{-1}c_{i}^{-1}\overset{\scriptscriptstyle\text{(R1)}}{=}a_{i+1}b_{i},
\end{aligned}
\end{align}
\begin{align}
\begin{aligned}
\hspace*{-1cm}a_{i+2}b_{i}(c_{i+1}c_ic_{i+2}c_{i+1})^2\overset{\scriptscriptstyle\text{(R9)}}{=}&a_{i+2}c_{i+1}c_ic_{i+2}c_{i+1}b_{i+2}c_{i+1}c_ic_{i+2}c_{i+1}\overset{\scriptscriptstyle\text{(R2),(R6)}}{=}\\
&c_{i+1}c_ic_{i+2}c_{i+1}a_{i}b_{i+2}(c_{i+1}c_ic_{i+2}c_{i+1})\\
&\overset{\scriptscriptstyle\text{(R16)}}{=}c_{i+1}c_ic_{i+2}c_{i+1}a_{i}b_{i+2}c_{i+1}^{-1}c_{i+2}^{-1}c_{i}^{-1}c_{i+1}^{-1}\overset{\scriptscriptstyle\text{(R2),(R6),(R9)}}{=}\\
&a_{i+2}b_{i}c_{i+1}c_ic_{i+2}c_{i+1}c_{i+1}^{-1}c_{i+2}^{-1}c_{i}^{-1}c_{i+1}^{-1}\overset{\scriptscriptstyle\text{(R1)}}{=}a_{i+2}b_{i}.
\end{aligned}
\end{align}
\end{proof}
Sometimes (for computational reasons) we want to have less generators and therefore the following presentation is useful.
\begin{prop}\label{prop3}
The monoid $SSB_n$ for $n\in \mathbb{Z}$ and $n>1$ is generated by $a,b$ and $c_i,c_i^{-1}$ for $i=1,\ldots,n-1$ and is subject to the following relations:
\begin{align}
c_ic_i^{-1}&=1=c_i^{-1}c_i, \tag{m1}\\
c_ic_j&=c_jc_i& \text{\;\; for } i+1<j<n, \tag{m2}\\
c_ic_{i+1}c_i&=c_{i+1}c_ic_{i+1}& \text{\;\; for } i<n-1, \tag{m3}\\
ac_i&=c_ia& \text{\;\; for } i\not=2, \tag{m4}\\
bc_i&=c_ib& \text{\;\; for } i\not=2, \tag{m5}\\
ac_2c_1^2c_2&=c_2c_1^2c_2a, \tag{m6}\\
bc_2c_1^2c_2&=c_2c_1^2c_2b, \tag{m7}\\
(ac_2c_3c_1c_2)^2&=(c_2c_3c_1c_2a)^2, \tag{m8}\\
(bc_2c_3c_1c_2)^2&=(c_2c_3c_1c_2b)^2, \tag{m9}\\
ac_2bc_2^{-1}&=c_2bc_2^{-1}a, \tag{m10}\\
ab&=ba, \tag{m11}\\
a^2&=a, \tag{m12}\\
b^2&=b, \tag{m13}\\
ac_1b&=ac_1^{-1}b, \tag{m14}\\
a(c_1c_2c_1)b&=a(c_1c_2c_1)^{-1}b, \tag{m15}\\
a(c_2c_3c_1c_2)b&=a(c_2c_3c_1c_2)^{-1}b. \tag{m16}
\end{align}
\end{prop}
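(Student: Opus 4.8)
The plan is to realise the presentation of Proposition~\ref{prop3} as the result of applying a sequence of Tietze transformations to the presentation of Proposition~\ref{prop2} (for the same number $n$ of strands). The mechanism is the classical observation that, in $SSB_n$, each singular generator $a_i$ (resp.\ $b_i$) with $i\geqslant 2$ is a conjugate of $a_1$ (resp.\ $b_1$) by a braid word. Put $a:=a_1$, $b:=b_1$, set $u_1:=1$ and $u_i:=(c_{i-1}c_i)(c_{i-2}c_{i-1})\cdots(c_1c_2)$ for $2\leqslant i\leqslant n-1$, so that $u_{i+1}=c_ic_{i+1}u_i$. Relations (R6) and (R7) are literally $a_{i+1}=c_ic_{i+1}a_ic_{i+1}^{-1}c_i^{-1}$ and $b_{i+1}=c_ic_{i+1}b_ic_{i+1}^{-1}c_i^{-1}$, so by induction the identities $a_i=u_iau_i^{-1}$ and $b_i=u_ibu_i^{-1}$ are consequences of (R1)-(R16). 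Adding them as relations and then deleting the generators $a_{n-1},b_{n-1},a_{n-2},b_{n-2},\dots,a_2,b_2$ one at a time (always eliminating the highest remaining index, whose defining word uses only lower indices and the $c_j^{\pm1}$) is a legitimate Tietze move, and it turns the presentation of Proposition~\ref{prop2} into a presentation of $SSB_n$ on $a,b,c_j^{\pm1}$ whose defining relations are the images of (R1)-(R16) under $a_i\mapsto u_iau_i^{-1}$, $b_i\mapsto u_ibu_i^{-1}$; under this substitution (R6) and (R7) become trivial, while (R8) and (R9) become nontrivial compatibility relations.

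It then suffices to prove that the set of these substituted relations and the set (m1)-(m16) generate the same congruence on the free monoid on $a,b,c_1^{\pm1},\dots,c_{n-1}^{\pm1}$; the claimed isomorphism of monoids follows. One direction is to show each (mj) is a consequence of the substituted relations: (m1)-(m3) are verbatim (R1), the $c_j$-instances of (R2) and (R5); (m4)-(m5) combine (R3)-(R4) at index $1$ with the instances of (R2) pairing $a_1$ or $b_1$ with a $c_j$, $j\geqslant 3$; (m10)-(m13) are the substituted forms of (R10)-(R13) at index $1$, simplified by (m4)-(m5); (m6)-(m7) are exactly the statement that $a_1$ and $b_1$ commute with $c_2c_1^2c_2$, which is the $i=1$ compatibility of the recursions (R6)/(R8) and (R7)/(R9); (m8)-(m9) repackage the analogous compatibility for the three-step conjugations, equivalently that $a_1$ and $b_1$ commute with $(c_2c_3c_1c_2)^2$; and (m14)-(m16) are, modulo the relations just obtained and the braid relations, the substituted forms of (R14)-(R16) at index $1$. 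For the converse direction, each substituted (Rk) at a general index is recovered from its base case (index $1$, or index $2$ for (R16) and for the index-gap-two instances of (R2)) by conjugating with a suitable $u_j$ and using (m2)-(m3); and each base case reduces to (m1)-(m16) by a direct computation, the recurring simplification being to absorb the long $c$-words produced by the conjugations using the braid relations and the centrality of $(c_jc_{j+1}c_j)^2$ in the three-strand subgroup $\langle c_j,c_{j+1}\rangle$.

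The main obstacle is precisely this last bundle of base-case verifications, which is where (m6)-(m9) and (m14)-(m16) do real work: an instance such as $a_1c_j=c_ja_1$ for $j\geqslant 3$, or $a_1b_2=b_2a_1$, once the conjugation formulas are inserted becomes a word identity in $a,b$ and $c_1,c_2,c_3$ whose reduction is not a formal consequence of the ``obvious'' relations (m2)-(m5) alone but genuinely uses the collision relations; the cases in which the supports of $a_i$ and $b_j$ share one strand (where (m14)-(m16) enter) or are disjoint but adjacent (where (m6)-(m9) enter) are the delicate ones. To keep that case analysis short I would first extract, as a sublemma proved from (m2)-(m9), the moves that push an occurrence of $a$ or $b$ past a block $c_{j+1}c_j^2c_{j+1}$ and past $(c_jc_{j+1}c_j)^2$, and only then run through the remaining instances of (R1)-(R16).
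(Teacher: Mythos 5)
Your proposal follows essentially the same route as the paper: set $a=a_1$, $b=b_1$, eliminate the higher-index singular generators via the conjugation formulas coming from (R6)--(R7), and then check in both directions that the substituted relations and (m1)--(m16) generate the same congruence, recovering the general-index instances of (R10)--(R16) from their index-one base cases by conjugation/induction. The paper executes exactly this plan, outsourcing the purely braid-theoretic base-case verifications (your ``sublemma'' about pushing $a$ or $b$ past $c_2c_1^2c_2$ and $(c_2c_3c_1c_2)^2$) to Prop.\ 2.2 of Dasbach--Gemein and writing out the inductive steps for (R10)--(R16) explicitly.
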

\begin{proof}
Set $a=a_1,b=b_1$ and introduce 
$$a_{i+1}=c_{i}c_{i+1}a_{i}c_{i+1}^{-1}c_{i}^{-1},\; b_{i+1}=c_{i}c_{i+1}b_{i}c_{i+1}^{-1}c_{i}^{-1}$$ 
for $i>1$ from the relations (R6), (R7). The relations (R1), (R5) are the same as (m1), (m3) respectively. From the proof of Prop.\;2.2 in \cite{DasGem00} (when $\tau$ is replaced here either by $a$ or $b$, and $\sigma$ is replaced by $c$), it follows that:

\begin{enumerate}
\item the relations $a_1a_3=a_3a_1$, $b_1b_3=b_3b_1$ (part of (R2)) follow from the relations (m1), (m8), (m9), (R6), (R7),
\item other relations from (R2) follow from (m1)-(m5), (R6), (R7),
\item the relations (R3), (R4) follow from (m1)-(m3), (R6), (R7),
\item the relations (R8), (R9) follow from (m1), (m6), (m7), (R6), (R7).
\end{enumerate}

For $i=1$, the relations (m10), (m14)-(m16) are easily equivalent to (R10), (R14)-(R16) respectively. Moreover, for $i=1$ the relations (R11)-(R13) are the same as (m11)-(m13) respectively. We now derive the relations (R10)-(R16) for $i>1$.
The relation (R10) follows from (m10) and the following inductive step
\begin{align}
\begin{aligned}\notag
\hspace*{-1cm}a_ib_{i+1}&=c_{i-1}c_ia_{i-1}c_i^{-1}c_{i-1}^{-1}c_{i-1}b_{i+1}c_{i-1}^{-1}=c_{i-1}c_ia_{i-1}c_i^{-1}c_ic_{i+1}b_ic_{i+1}^{-1}c_i^{-1}c_{i-1}^{-1}\\
&=c_{i-1}c_ic_{i+1}a_{i-1}b_ic_{i+1}^{-1}c_i^{-1}c_{i-1}^{-1}\overset{\text{ind.}}{=}c_{i-1}c_ic_{i+1}b_ia_{i-1}c_{i+1}^{-1}c_i^{-1}c_{i-1}^{-1}\\
&=b_{i+1}c_{i-1}c_ic_{i+1}c_{i+1}^{-1}c_i^{-1}c_{i-1}^{-1}a_i=b_{i+1}a_i.
\end{aligned}
\end{align}
The relation (R11) follows from (m11) and the following inductive step
\begin{align}\notag
\begin{aligned}
\hspace*{-1cm}a_ib_i&=a_ic_{i-1}c_{i}c_{i}^{-1}c_{i-1}^{-1}b_i=c_{i-1}c_{i}a_{i-1}b_{i-1}c_{i}^{-1}c_{i-1}^{-1}\overset{\text{ind.}}{=}c_{i-1}c_{i}b_{i-1}a_{i-1}c_{i}^{-1}c_{i-1}^{-1}\\
&=b_ic_{i-1}c_{i}c_{i}^{-1}c_{i-1}^{-1}a_i=b_ia_i.
\end{aligned}
\end{align}
The relation (R12) follows from (m12) and the following inductive step
\begin{align}\notag
\begin{aligned}
\hspace*{-3cm}a_i^2&=a_ia_i=a_ic_{i-1}c_{i}c_{i}^{-1}c_{i-1}^{-1}a_i=c_{i-1}c_{i}a_{i-1}a_{i-1}c_{i}^{-1}c_{i-1}^{-1}\overset{\text{ind.}}{=}c_{i-1}c_{i}a_{i-1}c_{i}^{-1}c_{i-1}^{-1}\\
&=a_ic_{i-1}c_{i}c_{i}^{-1}c_{i-1}^{-1}=a_i.
\end{aligned}
\end{align}
The relation (R13) follows from (m13) by the similar argument as in the relation (R12).\\ 
The relation (R14) follows from (m14) and the following inductive step
\begin{align}\notag
\begin{aligned}
\hspace*{-1cm}a_ib_ic_i^2&=a_ib_ic_{i-1}c_{i-1}^{-1}c_ic_{i-1}c_{i-1}^{-1}c_i=a_ib_ic_{i-1}c_{i}c_{i-1}c_i^{-1}c_{i-1}^{-1}c_i\\
&=c_{i-1}c_{i}a_{i-1}b_{i-1}c_{i-1}^2c_i^{-1}c_{i-1}^{-1}\overset{\text{ind.}}{=}c_{i-1}c_{i}a_{i-1}b_{i-1}c_i^{-1}c_{i-1}^{-1}\\
&=a_{i}b_{i}c_{i-1}c_{i}c_i^{-1}c_{i-1}^{-1}=a_ib_i.
\end{aligned}
\end{align}
The relation (R15) follows from (m15) and the following inductive step
\begin{align}\notag
\begin{aligned}
\hspace*{-1cm}a_ib_{i+1}(c_ic_{i+1}c_i)^2&=c_{i-1}c_ia_{i-1}c_i^{-1}c_{i-1}^{-1}c_{i-1}b_{i+1}c_{i-1}^{-1}(c_ic_{i+1}c_i)^2\\
&=c_{i-1}c_ia_{i-1}c_i^{-1}c_ic_{i+1}b_ic_{i+1}^{-1}c_i^{-1}c_{i-1}^{-1}(c_ic_{i+1}c_i)(c_ic_{i+1}c_i)\\
&=c_{i-1}c_ic_{i+1}a_{i-1}b_ic_{i+1}^{-1}c_{i-1}c_i^{-1}c_{i-1}^{-1}c_{i+1}c_i(c_ic_{i+1}c_i)\\
&=c_{i-1}c_ic_{i+1}a_{i-1}b_ic_{i-1}c_ic_{i+1}^{-1}c_i^{-1}c_{i-1}^{-1}c_i(c_ic_{i+1}c_i)\\
&=c_{i-1}c_ic_{i+1}a_{i-1}b_i(c_{i-1}c_{i}c_{i-1})c_{i+1}^{-1}c_i^{-1}c_{i-1}^{-1}(c_ic_{i+1}c_i)\\
&=c_{i-1}c_ic_{i+1}a_{i-1}b_i(c_{i-1}c_{i}c_{i-1})^2c_{i+1}^{-1}c_i^{-1}c_{i-1}^{-1}\\
&\overset{\text{ind.}}{=}c_{i-1}c_ic_{i+1}a_{i-1}b_ic_{i+1}^{-1}c_i^{-1}c_{i-1}^{-1}\\
&=a_ic_{i-1}c_ic_{i+1}c_{i+1}^{-1}c_i^{-1}c_{i-1}^{-1}b_{i+1}=a_ib_{i+1}.
\end{aligned}
\end{align}
The relation (R16) follows from (m16) and the following inductive step
\begin{align}\notag
\begin{aligned}
\hspace*{-1.3cm}a_ib_{i+2}(c_{i+1}c_ic_{i+2}c_{i+1})^2&=c_{i-1}c_ia_{i-1}c_i^{-1}c_{i-1}^{-1}c_{i-1}c_ib_{i+2}c_i^{-1}c_{i-1}^{-1}(c_{i+1}c_ic_{i+2}c_{i+1})^2\\
&=c_{i-1}c_ia_{i-1}c_{i+1}c_{i+2}b_{i+1}c_{i+2}^{-1}c_{i+1}^{-1}c_i^{-1}c_{i-1}^{-1}(c_{i+1}c_ic_{i+2}c_{i+1})^2\\
&=c_{i-1}c_ic_{i+1}c_{i+2}a_{i-1}b_{i+1}(c_ic_{i-1}c_{i+1}c_i)^2c_{i+2}^{-1}c_{i+1}^{-1}c_i^{-1}c_{i-1}^{-1}\\
&\overset{\text{ind.}}{=}c_{i-1}c_ic_{i+1}c_{i+2}a_{i-1}b_{i+1}c_{i+2}^{-1}c_{i+1}^{-1}c_i^{-1}c_{i-1}^{-1}\\
&=a_ic_{i-1}c_ic_{i+1}c_{i+2}c_{i+2}^{-1}c_{i+1}^{-1}c_i^{-1}c_{i-1}^{-1}b_{i+2}=a_ib_{i+2}.
\end{aligned}
\end{align}
\end{proof}
\begin{prop}[\cite{Jab13}]\label{prop1}
We have the following all (un)knotted surfaces whose surface singular braids can be defined with two strands
$\mathbb{P}^2_+=[ac_1]_2,$ $\mathbb{P}^2_-=[ac_1^{-1}]_2,$ $\mathbb{T}^2=[ab]_2,$ $\mathbb{KB}^2=[abc_1]_2,$ $\mathbb{S}^2=[c_1]_2,$ $\mathbb{S}^2\sqcup\mathbb{S}^2=[1]_2.$
The $n$-twist-spun surface-knot of the classical rational link $C[k_1,k_2,\ldots,k_{2m+1}]$ in Conway notation encodes as 
$$\tau^n(C[k_1,k_2,\ldots,k_{2m+1}])=[ac_2^{k_{2m+1}}c_1^{-k_{2m}}\cdots c_2^{k_1}bc_2^{-k_1}c_1^{k_2}\cdots c_2^{-k_{2m+1}}(c_1c_2c_1)^{2n}]_3.$$
\end{prop}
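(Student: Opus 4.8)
The plan is to argue entirely through the correspondence between marked graph diagrams in braid form and surface-links: a word $w\in SSB_m$ yields a bona fide marked graph diagram of a closed surface exactly when $L_+([w]_m)$ and $L_-([w]_m)$ are trivial links, and then $[w]_m$ represents the surface-link obtained from this diagram by capping the two trivial resolutions with disks pushed above $\mathbb R^3_1$ and below $\mathbb R^3_{-1}$; by Theorem~\ref{thm4} the relations (A1)--(A11) and (C1)--(C2) preserve the type of the resulting surface. For the list of two-strand examples I would first normalise: inside $SSB_2$ only the relations $c_1c_1^{-1}=c_1^{-1}c_1=1$, $a_1^2=a_1$, $b_1^2=b_1$, $a_1b_1=b_1a_1$ and $a_1b_1c_1^2=a_1b_1$ are in force, so every element is equivalent to one of $c_1^{k}$, $a_1c_1^{k}$, $b_1c_1^{k}$, $a_1b_1c_1^{k}$ with $k\in\mathbb Z$ (the exponent reduced using the last relation when both $a_1$ and $b_1$ appear). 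Replacing each marker by the two planar smoothings dictated by Fig.~\ref{pic03} and demanding that $L_+$ and $L_-$ both be trivial leaves only finitely many admissible exponents, and the closed-braid moves (C1)--(C2) (destabilisation, and sliding a marker around the closure) collapse these to $[1]_2$, $[c_1]_2$, $[ac_1]_2$, $[ac_1^{-1}]_2$, $[ab]_2$, $[abc_1]_2$.

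It then remains to name the six surfaces, which I would do by a Morse count supplemented by a short run of Yoshikawa moves. If a marked graph diagram has $p$ components in $L_+$, $q$ components in $L_-$ and $s$ markers, then, reading the surface as $q$ minimum disks, $s$ saddle bands and $p$ maximum disks, $\chi(F)=p+q-s$, and $F$ is orientable precisely when the smoothings admit a coherent orientation across all markers. Thus $[1]_2$ and $[c_1]_2$ are markerless diagrams of the $2$-component unlink and of the unknot, so $\chi=4$ and $\chi=2$ and they are $\mathbb S^2\sqcup\mathbb S^2$ and $\mathbb S^2$; $[ac_1^{\pm1}]_2$ has $s=1$ with $L_+$ and $L_-$ the unknot, so $\chi=1$ and $F$ is the connected (hence non-orientable) surface with that Euler characteristic, a real projective plane, the sign of the lone crossing fixing its normal Euler number and so distinguishing $\mathbb P^2_+$ from $\mathbb P^2_-$; $[ab]_2$ has $s=2$ with both resolutions the unknot, so $\chi=0$ and, being connected and orientable, $F=\mathbb T^2$; $[abc_1]_2$ likewise has $\chi=0$ but the extra crossing makes the band incoherent, so $F=\mathbb{KB}^2$. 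In the last three cases one uses Yoshikawa moves to bring the diagram to the standard model, confirming that the surface is the unknotted one.

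For the twist-spin formula I would start from the standard marked graph (ch-)diagram of a spun $2$-bridge knot. Write $\beta=c_2^{k_{2m+1}}c_1^{-k_{2m}}\cdots c_2^{k_1}$ for the $3$-braid whose continued-fraction data $k_1,\dots,k_{2m+1}$ describe the rational tangle in Conway notation, and $\overline\beta=c_2^{-k_1}c_1^{k_2}\cdots c_2^{-k_{2m+1}}$ for the mirrored word. In $[a\beta b\overline\beta]_3$ the two markers $a=a_1$ and $b=b_1$ are the pair of bands that turn the trivial $2$-component link (which is both $L_+$ and $L_-$) into the $2$-bridge knot $C[k_1,\dots,k_{2m+1}]$; that $L_\pm$ are trivial and that this diagram is $\tau^0(C[k_1,\dots,k_{2m+1}])$ is the classical dictionary relating rational tangles, $3$-braids with their plat-type closure, and spun $2$-bridge knots, which I would verify by induction on $m$. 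Finally $c_1c_2c_1=c_2c_1c_2=\Delta$ by (R5), so $(c_1c_2c_1)^{2n}=\Delta^{2n}$ is the $n$-fold full twist on three strands; inserting it realises, on the ch-diagram, Zeeman's $n$-twist-spinning operation, and, being a pure braid acting on a trivial link, it leaves $L_\pm$ trivial. Hence $[ac_2^{k_{2m+1}}c_1^{-k_{2m}}\cdots c_2^{k_1}bc_2^{-k_1}c_1^{k_2}\cdots c_2^{-k_{2m+1}}(c_1c_2c_1)^{2n}]_3=\tau^n(C[k_1,\dots,k_{2m+1}])$.

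The main obstacle lies in the two matching steps. For the two-strand list it is checking that every admissible normal form in $SSB_2$ genuinely reduces, through the moves of Theorem~\ref{thm4} and in particular (C1)--(C2), to one of the six named words, since stray cases such as $[a_1]_2$, $[a_1c_1^2]_2$ or $[b_1c_1]_2$ must be handled individually, and that the crossing sign really pins down $\mathbb P^2_+$ versus $\mathbb P^2_-$. For the twist-spin formula the real work is re-deriving, inductively on $m$, that the alternating-exponent word $c_2^{k_{2m+1}}c_1^{-k_{2m}}\cdots c_2^{k_1}$ is exactly the $3$-braid encoding $C[k_1,\dots,k_{2m+1}]$ and that padding by $\Delta^{2n}$ is precisely the twist-spinning of the corresponding surface-knot; this is essentially the content of \cite{Jab13} and rests on the classical theory of rational tangles and of twist-spun knots.
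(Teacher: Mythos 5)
The paper offers no proof of this proposition at all: it is imported verbatim from \cite{Jab13}, so there is no in-text argument to compare yours against. Your reconstruction of the two-strand half is sound and is presumably close to what \cite{Jab13} does: in $SSB_2$ the generators $a_1,b_1,c_1$ pairwise commute, $a_1,b_1$ are idempotent, and $a_1b_1c_1^2=a_1b_1$, so every element reduces to $c_1^k$, $a_1c_1^k$, $b_1c_1^k$, $a_1b_1$ or $a_1b_1c_1$; triviality of $L_\pm$ forces $k\in\{-1,0,1\}$ in the first three families; and the count $\chi=p+q-s$ together with orientability and the normal Euler number identifies the six surfaces. The stray cases you flag ($[a_1]_2$, $[b_1c_1]_2$, $[c_1^{-1}]_2$, \dots) are absorbed by (C2)-destabilisation and by the symmetry $[ac_1^{-1}]_2=[bc_1]_2$, so this half is essentially complete modulo routine verification.

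The genuine gap is in the twist-spin formula. You correctly isolate the two facts needed --- that $[a\beta b\beta^{-1}]_3$ with $\beta=c_2^{k_{2m+1}}c_1^{-k_{2m}}\cdots c_2^{k_1}$ is a marked graph diagram of the spun $2$-bridge knot $\tau^0(C[k_1,\dots,k_{2m+1}])$, and that appending the full twist $(c_1c_2c_1)^{2n}=\Delta^{2n}$ effects Zeeman's $n$-twist-spinning on that diagram --- but you prove neither: both are deferred to ``the classical dictionary'' and, explicitly, to \cite{Jab13} itself. Since the proposition is precisely the statement being imported from \cite{Jab13}, this is circular as a standalone argument. The actual content is the identification of the motion picture of $[a\beta b\beta^{-1}\Delta^{2n}]_3$: checking that $L_+$ and $L_-$ are trivial $2$-component links (which your conjugation argument does give), that the rational knot appears as the cross-section between the two separated saddle levels, and that the inserted pure braid implements the twist parameter in the sense of Zeeman; this would have to be carried out, e.g.\ by the induction on $m$ you mention together with an explicit comparison with a normal form for deform-spun knots. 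As written, the second half of your argument is a correct plan rather than a proof.
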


Some of knotted surfaces in Yoshikawa's table are included in the above case as follows: $6_1^{0,1}=\tau^0(C[2])$, $8_1=\tau^0(C[3])$, $10_1=\tau^0(C[2,1,1])$, $10_2=\tau^2(C[3])$, $10_3=\tau^3(C[3])$, $10_1^{0,1}=\tau^0(C[4])$. These and algebraic formulations of other knotted surfaces in Yoshikawa's table are summarized in Table\;\ref{tab01}.


\renewcommand{\arraystretch}{1.74}
\begin{center}
\small
$\begin{array}{|l|l|}
  \hline 
	\text{{\bf Name(s) of knotted surface}}&\text{{\bf Surface singular braid form}}\\
	\hline
	0_1, \text{ unknotted }\mathbb{S}^2 & [1]_1\\
	\hline
	2_1^{1}, \text{ unknotted } \mathbb{T}^2& [ab]_2\\
	\hline
	2_1^{-1}, \text{ unknotted } \mathbb{P}^2_+ & [ac_1]_2\\
  \hline
	\text{unknotted } \mathbb{P}^2_- & [bc_1]_2\\
  \hline
	\text{unknotted } \mathbb{KB}^2 & [abc_1]_2\\
	\hline
  7_1^{0,-2} & [abc_2^{-1}c_1^{-2}c_2^{-1}c_1^{-1}]_3\\
  \hline
	10_2^{0,1}& [ab(c_2c_1^2c_2)^2]_3\\
	\hline
	\tau^n(\text{rational link }C[k_1,k_2,\ldots,k_{2m+1}])& [ac_2^{k_{2m+1}}c_1^{-k_{2m}}\cdots c_2^{k_1}bc_2^{-k_1}c_1^{k_2}\cdots c_2^{-k_{2m+1}}(c_1c_2c_1)^{2n}]_3\\
  \hline
	8_1^{1,1}, \text{ spun surface of Hopf link}& [(abc_2^{-1}c_3^{-1}c_1c_2)^2]_4\\
  \hline
	8_1^{-1,-1} & [bc_2^{-1}c_1^{-1}c_2c_1^2c_3^{-1}c_2bc_2^{-1}c_1^{-1}c_2^{-1}c_1^2c_3^{-1}c_2]_4\\
	\hline
	9_1^{0,1} & [abc_2^{-1}c_3^{-1}c_2^2c_3^{-1}c_2c_1^2c_2]_4\\
  \hline
	9_1^{1,-2} & [(abc_2^{-1}c_3^{-1}c_1c_2)^2c_1^{-1}]_4\\
  \hline
	10_1^{1}, \text{ spun torus of the trefoil}& [ac_2c_3c_1c_2bc_2^{-1}c_1^{-1}c_3^{-1}c_2^3c_3c_1c_2ac_2^{-1}c_1^{-1}c_3^{-1}c_2^{-1}bc_2^{-3}]_4\\
  \hline
	10_1^{1,1} & [ac_2^{-1}c_1^{-1}c_3c_2^{-1}bc_2^{-1}c_1^2c_2ac_2^{-1}c_1^{-1}c_3^{-1}c_2bc_2^2]_4\\
  \hline
	10_1^{0,0,1} & [abc_3c_2^{-1}c_1^{-2}c_2^{-1}c_3^{-1}c_2c_1^2c_2]_4\\
  \hline
	10_1^{0,-2} & [abc_2^{-1}c_1^{-1}c_2^{-1}c_1^{-1}c_3c_2^{-2}c_3]_4\\
  \hline
	10_2^{0,-2} & [ac_2^{-1}c_3c_2c_1^{-1}c_2^2bc_2^{-1}c_1^{-1}c_3^{-1}c_2^{-1}c_1c_2^{-1}]_4\\
  \hline
	10_1^{-1,-1}& [ac_2^{-1}c_1^{-1}c_3^{-1}c_2bc_2c_1^{-1}c_2c_3c_2^{-1}c_1c_2^{-1}c_1^2c_2]_4\\
	\hline
	10_1^{-2,-2}& [(abc_2^{-1}c_3^{-1}c_1c_2c_1^{-1})^2]_4\\
	\hline
	9_1 & [ac_2^{-1}c_1^{-1}c_3^{-1}c_2^{-1}c_4^{-1}c_3^{-2}c_2c_1^{-1}c_3c_2c_3c_4c_2c_3c_1c_2bc_2^{-1}\cdot\\
	    & \cdot c_1^{-1}c_3^{-1}c_2^{-1}c_4^{-1}c_5^{-1}c_6^{-1}c_4^{-1}c_5c_7^{-1}c_6^{-1}c_4^{-1}c_3^{-1}c_4c_2^{-1}c_3^{-1}\cdot\\
			& \cdot c_1c_2^{-1}c_4c_5c_4c_3c_4c_5^{-1}c_6c_5^{-1}c_4^{-1}c_7c_6c_5c_3c_4c_2c_3c_1c_2]_8\\
\hline
	\end{array}$
	\captionof{table}{Surface singular braid formulations of knotted surfaces.\label{tab01}}
\end{center}
\renewcommand{\arraystretch}{1}

\begin{prop}\label{prop5}
In the monoid $SSB_n$ for $n\in \mathbb{Z}$ and $n>1$ the following relations hold.
\begin{align}
abc_1&\not=ab, \tag{e1}\\
ac_1^2&\not=a, \tag{e2}\\
bc_1^2&\not=b, \tag{e3}\\
ac_2&\not=c_2a& \text{\;\; for } n>2, \tag{e4}\\
bc_2&\not=c_2b& \text{\;\; for } n>2, \tag{e5}\\
c_1c_2&\not=c_2c_1& \text{\;\; for } n>2, \tag{e6}\\
(c_1c_2c_1)^2&\not=1& \text{\;\; for } n>2. \tag{e7}
\end{align}
\end{prop}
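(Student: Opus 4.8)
I would prove the seven inequalities in three groups, by three different devices; recall that in $SSB_n$ one has $a=a_1$ and $b=b_1$.

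\textbf{Items (e1)--(e3).} These I would settle topologically, using Proposition~\ref{prop1} and Theorem~\ref{thm4}. For (e1): by Proposition~\ref{prop1}, $[abc_1]_2=\mathbb{KB}^2$ and $[ab]_2=\mathbb{T}^2$, so an equality $abc_1=ab$ in $SSB_n$ would force, by Theorem~\ref{thm4}, the Klein bottle and the torus to be equivalent knotted surfaces, which is impossible (one is non-orientable, the other orientable). For (e2): if $ac_1^2=a$, then right-multiplying by $c_1^{-1}$ and using (R1) gives $ac_1=ac_1^{-1}$; but $[ac_1]_2=\mathbb{P}^2_+$ and $[ac_1^{-1}]_2=\mathbb{P}^2_-$ by Proposition~\ref{prop1}, and these two unknotted projective planes are inequivalent (distinguished, for instance, by their normal Euler number, which is invariant under orientation-preserving homeomorphisms of $\mathbb{R}^4$), contradicting Theorem~\ref{thm4}. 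For (e3): the assignment $a_i\mapsto b_i$, $b_i\mapsto a_i$, $c_i\mapsto c_i$ carries every defining relation of Proposition~\ref{prop2} to a relation holding in $SSB_n$ (for (R10) and (R14)--(R16) one uses the derivations appearing in the proof of Proposition~\ref{prop2}), hence is an automorphism of $SSB_n$; applying it to a hypothetical equality $bc_1^2=b$ produces $ac_1^2=a$, contradicting (e2).

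\textbf{Items (e4)--(e5).} No homomorphism of $SSB_n$ to a group can separate $ac_2$ from $c_2a$, since $a^2=a$ forces $a$ to a unit idempotent, i.e.\ to the identity; so I would exhibit a concrete matrix representation. Put $V=\mathbb{C}^2$ and define $\phi\colon SSB_n\to\operatorname{End}_{\mathbb{C}}(V^{\otimes n})$ on the generators of Proposition~\ref{prop2} by letting $\phi(c_i^{\pm1})=T_i$ be the transposition of the $i$-th and $(i{+}1)$-st tensor factors, $\phi(a_i)=S_i=\tfrac12(\mathrm{id}+T_i)$ the symmetrizing projector on those two factors, and $\phi(b_i)=\mathrm{id}$. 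One checks directly that $\phi$ respects (R1)--(R16): the $T_i$ are involutions with $T_iT_j=T_jT_i$ for $|i-j|\ge 2$ and $T_iT_{i+1}T_i=T_{i+1}T_iT_{i+1}$, so (R1) and (R5) hold, and since $\phi(c_i)^2=\mathrm{id}$ the relation (R14) is trivial while in (R15)--(R16) the bracketed braid words map to involutions, making those relations trivial too; $S_i$ is a polynomial in $T_i$, which gives (R3) (and (R4) is immediate), and a short computation using the braid relation shows $T_iT_{i+1}S_iT_{i+1}T_i=S_{i+1}$ and $T_{i+1}T_iS_{i+1}T_iT_{i+1}=S_i$, which give (R6) and (R8) (with (R7), (R9) trivial); finally (R10)--(R13) are obvious. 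Evaluating $\phi(ac_2)=S_1T_2$ and $\phi(c_2a)=T_2S_1$ on the vector $e_1\otimes e_2\otimes e_1$ in the first three tensor factors shows that they differ when $n\ge 3$, so $ac_2\ne c_2a$, proving (e4); applying the automorphism $a_i\leftrightarrow b_i$ then yields (e5).

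\textbf{Items (e6)--(e7).} Let $B_n^{0}=B_n\cup\{0\}$ be the braid group with an adjoined two-sided absorbing zero, and define $\psi\colon SSB_n\to B_n^{0}$ by $\psi(c_i^{\pm1})=\sigma_i^{\pm1}$, $\psi(a_i)=\psi(b_i)=0$. Every defining relation of Proposition~\ref{prop2} that involves some $a_i$ or $b_i$ has both sides divisible by such a generator, hence both sides are sent to $0$; the remaining relations are exactly the braid relations among the $\sigma_i$; so $\psi$ is a well-defined monoid homomorphism. Then $\psi(c_1c_2)=\sigma_1\sigma_2\ne\sigma_2\sigma_1=\psi(c_2c_1)$ and $\psi\big((c_1c_2c_1)^2\big)=(\sigma_1\sigma_2\sigma_1)^2\ne 1=\psi(1)$ in $B_n$ for $n\ge 3$ (the latter element being a non-trivial full twist on the first three strands), which gives (e6) and (e7).

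The step I expect to be the real obstacle is the one behind (e4)--(e5): the natural homomorphisms out of $SSB_n$ --- to symmetric groups, to $B_n^{0}$, to symmetric inverse monoids --- all happen to identify $a_1$ with $a_2$, equivalently to satisfy $ac_2=c_2a$, so the content lies in finding a representation in which this accidental coincidence fails; once the symmetrizer--transposition representation $\phi$ above is written down, verifying the sixteen relations and the non-commutation is routine.
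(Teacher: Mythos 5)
Your proof is correct, and it splits into a part that matches the paper and a part that is genuinely different. For (e1)--(e3) you argue exactly as the paper does: both proofs reduce to the inequivalence of the closures $[ab]_2=\mathbb{T}^2$, $[abc_1]_2=\mathbb{KB}^2$, $[ac_1]_2=\mathbb{P}^2_+$, $[ac_1^{-1}]_2=\mathbb{P}^2_-$ via Theorem~\ref{thm4} and Proposition~\ref{prop1} (the paper handles (e3) by the symmetric chain $[bc_1^{-1}]_n\neq[bc_1]_n$ rather than by the $a\leftrightarrow b$ automorphism, but that is cosmetic). For (e4)--(e7) the paper stays topological: it invokes the nontriviality of the spun trefoil $[ac_2^{-3}bc_2^{3}]_3$ and Zeeman's theorem that its $1$-twist-spin $[ac_2^{-3}bc_2^{3}(c_1c_2c_1)^{2}]_3$ is unknotted, and extracts (e4)--(e7) from the resulting chain of inequivalent surface-links together with (m1), (m3)--(m7), (m14), (m15). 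You instead construct explicit homomorphisms out of the presentation of Proposition~\ref{prop2}: the symmetrizer/transposition action on $V^{\otimes n}$ (which I have checked does satisfy (R1)--(R16); in particular the words in (R15), (R16) map to involutions, and $T_iT_{i+1}S_iT_{i+1}T_i=S_{i+1}$ gives (R6), (R8)) separates $ac_2$ from $c_2a$ on $e_1\otimes e_2\otimes e_1$, the $a\leftrightarrow b$ swap (a legitimate automorphism, which the paper itself uses tacitly in Proposition~\ref{prop4}) transfers this to (e5), and the quotient $SSB_n\to B_n\cup\{0\}$ killing $a,b$ settles (e6), (e7) since $(\sigma_1\sigma_2\sigma_1)^2$ is a nontrivial full twist. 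Your route is more elementary and self-contained, needing only the algebraic presentation and standard braid-group facts, and it nicely isolates why naive quotients fail to see (e4); the paper's route buys the stronger geometric statement that the four displayed closed braids represent pairwise inequivalent surface-links, at the cost of importing the nontriviality of the spun trefoil and Zeeman's twist-spinning theorem. Both arguments are valid.
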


\begin{proof}
For elements of the monoid $SSB_n$ it follows that (see Thm.\;\ref{thm4} and Prop.\;\ref{prop1}):
$$[abc_1]_n=\mathbb{KB}^2\sqcup\underbrace{\mathbb{S}^2\sqcup\ldots\sqcup\mathbb{S}^2}_{n-2}\not=\mathbb{T}^2\sqcup\underbrace{\mathbb{S}^2\sqcup\ldots\sqcup\mathbb{S}^2}_{n-2}=[ab]_n,$$
$$[ac_1]_n=[bc_1^{-1}]_n=\mathbb{P}^2_+\sqcup\underbrace{\mathbb{S}^2\sqcup\ldots\sqcup\mathbb{S}^2}_{n-2}\not=\mathbb{P}^2_-\sqcup\underbrace{\mathbb{S}^2\sqcup\ldots\sqcup\mathbb{S}^2}_{n-2}=[bc_1]_n=[ac_1^{-1}]_n.$$
This implies the relations (e1)-(e3). Consider now the spun $2$-knot of the trefoil, it is the well known nontrivial $2$-knot, as its group is isomorphic to the group of the classical trefoil. It follows from Prop.\;\ref{prop1} that this knotted $2$-sphere can be presented as $\tau^0(T(2,3))=[ac_2^{-3}bc_2^{3}]_3$, we also have trivial $2$-sphere $\tau^1(T(2,3))=[ac_2^{-3}bc_2^{3}(c_1c_2c_1)^{2}]_3$ (see \cite{Zee65} for the proof), therefore we have

$$\mathbb{T}^2\sqcup\bigsqcup_{n-2}\mathbb{S}^2=[ab]_n\not=[ac_2^{-3}bc_2^{3}]_n\not=[ac_2^{-3}bc_2^{3}(c_1c_2c_1)^{2}]_n\not=[abc_1c_2]_n=\mathbb{KB}^2\sqcup\bigsqcup_{n-3}\mathbb{S}^2.$$
This, together with the relations (m1), (m3)-(m7), (m14), (m15) implies the relations (e4)-(e7).
\end{proof}

\section{Representations}

Let $K$ throughout this paper denote a field. By a \emph{representation} of a monoid $D$ of dimension $n$ over $K$ we mean a homomorphism $\rho$ of $D$ into the multiplicative monoid of $M_n(K)$ of all $n\times n$ matrices with entries in $K$. If $\rho$ is injective then the representation is said to be \emph{faithful}. Denote $I_t$ and $0_t$ the identity matrix and the zero matrix of size $t\times t$ respectively.
\begin{prop}\label{prop6}
For $n,m\geqslant 2$ and any faithful representation $\phi: SSB_n\to M_m(K)$, there is a faithful representation $\rho: SSB_n\to M_m(K)$ such that:

\begin{align}
\rho(a)&=I_s\oplus 0_{m-s}& \text{\;\; where } s\in\{1,\ldots, m-1\}, \tag{p1}\\
\rho(b)&\not\in\{0_m,I_m\}, \tag{p2}\\
\rho(a)&\not=\rho(b), \tag{p3}\\
\rho(a)\rho(b)&\not=\rho(a), \tag{p4}\\
\rho(a)\rho(b)&\not=\rho(b), \tag{p5}\\
\rho(a)\rho(b)\rho(c_1)&\not=\rho(a)\rho(b), \tag{p6}\\
\rho(a)\rho(c_2)&\not=\rho(c_2)\rho(a)& \text{\;\; for } n>2, \tag{p7}\\
\rho(b)\rho(c_2)&\not=\rho(c_2)\rho(b)& \text{\;\; for } n>2, \tag{p8}\\
\rho(c_1)\rho(c_2)&\not=\rho(c_2)\rho(c_1)& \text{\;\; for } n>2, \tag{p9}\\
(\rho(c_1)\rho(c_2)\rho(c_1))^2&\not=I_m& \text{\;\; for } n>2, \tag{p10}\\
\rho(a)\rho(c_1)^2&\not=\rho(a), \tag{p11}\\
\rho(b)\rho(c_1)^2&\not=\rho(b). \tag{p12}
\end{align}
\end{prop}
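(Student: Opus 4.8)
The plan is to start from an arbitrary faithful representation $\phi$ and normalize it. First I would use the relation (R12), $a^2 = a$, which forces $\phi(a)$ to be an idempotent matrix; by a change of basis (i.e. replacing $\phi$ by $g\phi(\cdot)g^{-1}$ for a suitable invertible $g$, which preserves faithfulness) we can assume $\phi(a)$ is the diagonal idempotent $I_s \oplus 0_{m-s}$ for some $s \in \{0,1,\ldots,m\}$. The cases $s = 0$ and $s = m$, i.e. $\phi(a) \in \{0_m, I_m\}$, must be ruled out: if $\phi(a)$ were central-looking in this trivial way one would contradict a relation from Proposition~\ref{prop5} — concretely, $\phi(a) = 0_m$ would give $\phi(a)\phi(c_1) = \phi(a)$, contradicting (e2) (after multiplying (e2) through), and $\phi(a) = I_m$ would make $\phi(ac_1^2) = \phi(c_1^2)$ and $\phi(a) = I_m$, again contradicting (e2) via faithfulness. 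So $s \in \{1,\ldots,m-1\}$, giving (p1). Call this normalized representation $\rho$.

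Next, all of (p2)–(p12) are simply the images under the \emph{faithful} homomorphism $\rho$ of genuine inequalities of monoid elements. So the core of the argument is: each of (p2)–(p12) follows from faithfulness of $\rho$ together with a corresponding inequality in $SSB_n$ itself, most of which are exactly the content of Proposition~\ref{prop5}. Explicitly: (p6) is the image of (e1) ($abc_1 \neq ab$); (p11) is (e2); (p12) is (e3); (p7) is (e4); (p8) is (e5); (p9) is (e6); and (p10) is (e7) (using that $\rho$ is a homomorphism so $\rho(c_1 c_2 c_1)^2 = \rho((c_1c_2c_1)^2)$ and $\rho(1) = I_m$). For (p2), (p4), (p5): $b \neq 1$ and $b \neq 0$ — here $0$ is not literally in the monoid, so (p2) must be phrased as "$\rho(b)$ is neither the image of an absorbing element nor the identity"; more carefully, $\rho(b) = I_m$ would give $b = 1$ by faithfulness, contradicting e.g. $[b]_n \neq [1]_n$ (different surface types from Proposition~\ref{prop1}), and $\rho(b) = 0_m$ is impossible because no element of $SSB_n$ is absorbing (e.g. $\rho(b)\rho(c_1) = \rho(b c_1)$ and $bc_1 \neq $ would have to collapse, contradicting that $[bc_1]_n$ and $[b]_n$ are distinct surfaces, hence $b \ne bc_1$... one checks $\rho(b)=0$ forces $\rho(bc_1)=0=\rho(b)$ hence $bc_1=b$, false). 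Similarly (p3) is $a \neq b$, which holds since $[a c_1]_n = \mathbb{P}^2_+ \sqcup \cdots \neq \mathbb{P}^2_- \sqcup \cdots = [b c_1]_n$ forces $a \neq b$; (p4) is $ab \neq a$ and (p5) is $ab \neq b$, both of which follow because $[ab]_n = \mathbb{T}^2 \sqcup \cdots$ while $[a]_n$ and $[b]_n$ are $\mathbb{P}^2_\pm \sqcup \cdots$ type (distinct surface types), or alternatively from (e2)/(e3) combined with (R10)–(R14).

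I would organize the write-up as: (i) normalize $\phi$ to get (p1) and note $\rho$ remains faithful; (ii) observe that every remaining line is "image under injective $\rho$ of a true inequality in $SSB_n$", then tabulate which inequality each one comes from, citing Proposition~\ref{prop5} for (p6)–(p12) and Proposition~\ref{prop1} (distinctness of surface types) together with relations (R10)–(R14) for (p2)–(p5). The main obstacle I anticipate is the bookkeeping around (p2): the statement mentions $0_m$, but $SSB_n$ has no zero element, so one cannot literally pull back $\rho(b) = 0_m$ to a monoid relation. The clean fix is to argue that $\rho(b) = 0_m$ would force $\rho(bx) = \rho(b)$ for all $x$, in particular $\rho(bc_1) = \rho(b)$, hence $bc_1 = b$ in $SSB_n$ by faithfulness, contradicting (e3) rewritten — or more directly contradicting that $[bc_1]_n = \mathbb{P}^2_- \sqcup \bigsqcup_{n-2}\mathbb{S}^2 \neq \mathbb{P}^2_+ \sqcup \bigsqcup_{n-2}\mathbb{S}^2 = [b \cdot \text{(something trivial)}]_n$. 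A secondary but routine point is checking that the conjugation used to diagonalize $\phi(a)$ indeed exists over $K$: an idempotent matrix over any field is conjugate to $I_s \oplus 0_{m-s}$, which is standard, so no genuine difficulty there.
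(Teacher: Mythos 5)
Your proposal takes the same route as the paper: conjugate the idempotent $\phi(a)$ into the form $I_s\oplus 0_{m-s}$ (the paper isolates the diagonalizability statement as Lemma~\ref{lem1}), and then pull each of (p2)--(p12) back through the injective homomorphism $\rho$ to an inequality in $SSB_n$ supplied by Proposition~\ref{prop5} together with the relations (m1), (m12)--(m14). Your treatment of (p2) --- observing that $0_m$ need not have a preimage, so one must instead derive $bc_1=b$, hence $bc_1^2=b$, against (e3) --- is a correct and slightly more explicit version of what the paper leaves implicit.

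One sub-step fails as written: the exclusion of $s=m$. If $\phi(a)=I_m$ then faithfulness gives $a=1$, but $a=1$ only yields $ac_1^2=c_1^2$, which does \emph{not} contradict (e2): the inequality $ac_1^2\neq a$ merely becomes $c_1^2\neq 1$, which is perfectly consistent. You need a different inequality here. The paper's route is to deduce $a\neq 1$ from (m14) and (e3): if $a=1$ then $bc_1^2=abc_1^2=ab=b$, contradicting (e3). (Alternatively, $[ac_1]_n=\mathbb{P}^2_+\sqcup\cdots$ is non-orientable while $[c_1]_n=\mathbb{S}^2\sqcup\cdots$ is not, so $ac_1\neq c_1$ and hence $a\neq 1$.) A second, smaller slip: $[a]_n$ and $[b]_n$ are not of type $\mathbb{P}^2_\pm\sqcup\cdots$ --- a lone marker with no crossing closes to an orientable surface --- so your first justification of (p4) and (p5) is off; however, the alternative you offer, namely (m14) combined with (p11) and (p12), is exactly the paper's argument and is correct. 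With these two repairs your proof is complete.
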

Let us recall the following property of idempotent matrix.
\begin{lem}\label{lem1}
If a matrix $X$ with entries in a field $K$ satisfies $X^2=X$ then it is diagonalizable and all its eigenvalues are either $0$ or $1$.
\end{lem}
\begin{proof}
Consider $X$ as an endomorphism operator on a vector space $V$. Take any nonzero vector $u\in \text{im}X$, then there exists $v\in V$ such that $Xv=u$, from the idempotency relation $X^2=X$ we have $u=Xv=XXv=Xu$ which yields $u\not\in \text{ker}X$, so we have $V=\text{im}X\oplus\text{ker}X$, therefore $X$ is diagonalizable. If $\lambda$ is its eigenvalue then there exists nonzero vector $v\in V$ such that $\lambda v=Xv=X^2v=X\lambda v=\lambda^2 v$. We must have then that $\lambda(\lambda-1)=0$, and because $K$ is a field, this implies $\lambda\in\{0,1\}$.
\end{proof}
\begin{proof}[Proof of Prop.\;\ref{prop6}]
The monoid $M_m(K)$ of $m\times m$ matrices over $K$ can be identified with $End_K(V)$, the monoid of endomorphisms of a vector space $V$ over $K$ of finite dimension $m$. Applying Lem.\;\ref{lem1} for $X=\phi(a)$, we can conclude that there exists a matrix $P\in GL_m(K)$ such that $P^{-1}\phi(a)P=I_s\oplus 0_{m-s}$, where $s\in\{0,\ldots, m\}$. We define a new representation by setting $\rho(x)=P^{-1}\phi(x)P$ for any $x\in SSB_n$, and now its injectivity follows immediately from injectivity of $\phi$. It proves (p1) beside the cases $s=0$, $s=m$ which will be excluded later.

From Prop.\;\ref{prop5} we have $abc_1\not=ab$, $ac_1^2\not=a$ and $bc_1^2\not=b$, and together with the relations (m1) and (m12)-(m14) we moreover have $a\not=b$, $b\not=1$, $a\not=1$, hence from injectivity of $\rho$ we have the cases (p2), (p3), (p6), (p11), (p12) and remaining cases $s=0, s=m$ from (p1). The relations (p4) and (p5) follow from (m14) together with (p11) and (p12) respectively.
The remaining relations (p7)-(p10) follow directly from (e4)-(e7).
\end{proof}
\begin{prop}\label{prop4}
If a representation $\rho: SSB_n\to M_m(K)$ for $n,m\geqslant 2$ satisfies $\text{rank}(\rho(a))=1$ or $\text{rank}(\rho(b))=1$, then $\rho$ is not faithful.
\end{prop}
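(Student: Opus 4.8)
The plan is to exploit the structural normal form for faithful representations provided by Prop.\;\ref{prop6}. Suppose $\rho$ is faithful with, say, $\mathrm{rank}(\rho(a))=1$. By Prop.\;\ref{prop6} I may assume (after conjugation, which preserves faithfulness and all rank conditions) that $\rho(a)=I_1\oplus 0_{m-1}$, i.e. $\rho(a)$ is the rank-one idempotent with a single $1$ in the top-left corner. Write $\rho(b)=B$, $\rho(c_1)=C$. The idea is that the relations $ab=ba$ (R11), $b^2=b$ (R13), $ac_1b=ac_1^{-1}b$ (m14), together with $\rho(a)$ being this very rigid matrix, force so much structure on $B$ and $C$ that one of the ``non-relations'' from Prop.\;\ref{prop6}, most likely (p6) $\rho(a)\rho(b)\rho(c_1)\neq\rho(a)\rho(b)$, collapses.

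The key steps, in order: \emph{(1)} From $\rho(a)B=B\rho(a)$ and the block form of $\rho(a)$, conclude that $B$ is block-diagonal, $B=\begin{bmatrix}\beta & 0\\ 0 & B'\end{bmatrix}$ with $\beta\in K$ a scalar and $B'\in M_{m-1}(K)$. \emph{(2)} From $B^2=B$ (Lem.\;\ref{lem1}) get $\beta\in\{0,1\}$ and $B'$ idempotent. \emph{(3)} Compute $\rho(a)B=\mathrm{diag}(\beta,0,\dots,0)$; this already equals either $0_m$ or $\rho(a)$, so the product $\rho(a)\rho(b)$ is extremely degenerate. \emph{(4)} Now bring in $C=\rho(c_1)$. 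Writing $C$ in the same block decomposition, the relation (m14) $\rho(a)C\rho(b)=\rho(a)C^{-1}\rho(b)$ only involves the top row of $C$ and the structure of $B$; combined with the invertibility of $C$ it pins down the top-left entry of $C$ and the interaction between the first coordinate and the rest. \emph{(5)} Feed this into $\rho(a)\rho(b)\rho(c_1)$: because $\rho(a)\rho(b)$ has rank at most one and is supported on the first coordinate, multiplying by $C$ on the right produces a matrix whose nonzero part is a scalar multiple of the top row of $C$; I then show this forces $\rho(a)\rho(b)\rho(c_1)=\rho(a)\rho(b)$, contradicting (p6). If the $b$-branch is the hypothesis instead, the relation $a=a$ needs replacing — but (R11) is symmetric in $a,b$ in the sense that one can repeat the argument with the roles of $a$ and $b$ swapped, now using that $\rho(b)$ is conjugate to $I_1\oplus 0_{m-1}$ by Lem.\;\ref{lem1} and re-deriving the analogue of Prop.\;\ref{prop6}(p1) for $b$; alternatively one notes $bc_1^2\neq b$ and $ac_1b=ac_1^{-1}b$ are the mirror statements used above.

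The main obstacle I anticipate is step (5): controlling $\rho(a)\rho(b)\rho(c_1)$ precisely enough. The product $\rho(a)\rho(b)$ is a rank-$\le 1$ matrix living in the first coordinate, so right-multiplication by $C$ spreads it along the first row of $C$; to derive the contradiction I need that row to be $(\ast,0,\dots,0)$, which should come out of the (m14) relation once $\rho(a)\rho(b)$ is known to be so degenerate — but making the linear algebra airtight (especially distinguishing the subcases $\beta=0$ versus $\beta=1$, and handling the possibility $\rho(a)\rho(b)=0_m$, which would itself contradict faithfulness via $ab\neq$ any word mapping to $0$) is where the care is needed. A secondary subtlety is ensuring the normalization of Prop.\;\ref{prop6} is available: it requires a faithful $\phi$ to start with, which is exactly the hypothesis-for-contradiction here, so there is no circularity.
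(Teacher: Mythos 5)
Your steps (1)--(3) are exactly the paper's proof, and at the end of step (3) you are already done --- the proposal's mistake is not noticing this and instead launching into steps (4)--(5). After normalizing $\rho(a)=I_1\oplus 0_{m-1}$ via (p1), using $AB=BA$ to make $B=(\beta)\oplus B'$ block-diagonal, and using $B^2=B$ to get $\beta\in\{0,1\}$, you have $AB=\beta\cdot(I_1\oplus 0_{m-1})=\beta A$. If $\beta=1$ then $AB=A$, contradicting (p4) of Prop.~\ref{prop6}; if $\beta=0$ then $AB=0_m$, so $ABC_1=0_m=AB$, contradicting (p6) (equivalently, $\rho$ identifies $ab$ and $abc_1$, which are distinct in $SSB_n$ by (e1)). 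That is the whole argument; the paper closes it with (p4) in essentially this way.

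The genuine gap is in your planned step (5). You aim to derive the contradiction from (p6) in both branches by analyzing $\rho(c_1)$ through (m14), and you correctly flag this as the step you cannot make airtight. It is not just hard --- in the branch $\beta=1$ it is the wrong target: there $AB=A$, so contradicting (p6) would require $AC_1=A$, and nothing forces that ($C_1$ is invertible and (m14) does not pin down its first row the way you hope). The analysis of $C_1$ is simply unnecessary; the relations $ab=ba$, $b^2=b$ and the non-relations (p4), (p6) already supplied by Prop.~\ref{prop6} suffice. Your handling of the symmetry (the case $\mathrm{rank}(\rho(b))=1$) is fine: apply Lem.~\ref{lem1} to $\rho(b)$ to get the normal form $I_1\oplus 0_{m-1}$ for $b$ instead, and swap the roles of $a$ and $b$ throughout, which is what the paper does in one sentence.
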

\begin{proof}
From the symmetric role of $a$ and $b$ in $SSB_n$, we can assume that $\text{rank}(\rho(a))=1$. Denote $A=\rho(a), B=\rho(b)$ and $B=(b_{i,j})_{i,j\in\{1,\ldots,m\}}$. From the relation (p1) of Prop.\;\ref{prop6} we can assume that $A=I_1\oplus 0_{m-1}$, then from $AB=BA$ it follows that $b_{1,2}=\cdots=b_{1,m}=0$ and $b_{2,1}=\cdots=b_{m,1}=0$. This implies that $AB=(b_{1,1})\oplus 0_{m-1}$, and combining it with $B^2=B$ gives us the relation $AB=A$ that contradicts the relation (p4) of Prop.\;\ref{prop6}.
\end{proof}
From Prop.\;\ref{prop6} and Prop.\;\ref{prop4} we immediately have the following.
\begin{cor}
No representation $\rho: SSB_n\to M_2(K)$ for $n\geqslant 2$ is faithful.
\end{cor}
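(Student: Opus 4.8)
The plan is to argue by contradiction, feeding a hypothetical faithful two-dimensional representation through the two preceding propositions. So suppose $\rho\colon SSB_n\to M_2(K)$ is a faithful representation for some $n\geqslant 2$. Since faithfulness and the size of the target are preserved, I may first invoke Proposition~\ref{prop6} (with $m=2$): it produces a faithful representation, which I again call $\rho$, satisfying property (p1), namely $\rho(a)=I_s\oplus 0_{m-s}$ with $s\in\{1,\ldots,m-1\}$.

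The crucial observation is that for $m=2$ the index set $\{1,\ldots,m-1\}$ collapses to the single value $s=1$. Hence $\rho(a)=I_1\oplus 0_{1}$, which is a rank-one matrix; in particular $\mathrm{rank}\big(\rho(a)\big)=1$.

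Now I would simply apply Proposition~\ref{prop4} to this $\rho$: since $\mathrm{rank}\big(\rho(a)\big)=1$, that proposition tells us $\rho$ is not faithful, contradicting our standing assumption. Therefore no faithful $\rho\colon SSB_n\to M_2(K)$ exists for any $n\geqslant 2$, which is the assertion of the corollary.

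There is essentially no obstacle left at this stage — all the substantive work has already been carried out in Propositions~\ref{prop5}, \ref{prop6} and \ref{prop4}. The only point requiring a moment's care is the bookkeeping of the dimension bound: one must note that Proposition~\ref{prop6} only guarantees $s\neq 0$ and $s\neq m$ (those extreme cases being the ones ruled out via $a\neq 1$ and $b\neq 1$ using (m1), (m12)--(m14)), and that these two exclusions, when $m=2$, leave no room other than the rank-one case that Proposition~\ref{prop4} kills.
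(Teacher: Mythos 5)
Your argument is correct and is exactly the paper's intended route: the paper derives the corollary "immediately" from Proposition~\ref{prop6} (which forces $\rho(a)=I_1\oplus 0_1$ when $m=2$, hence rank one) together with Proposition~\ref{prop4}. Nothing further is needed.
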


\begin{exmp}
A faithful representation $\rho$ of the monoid $SSB_2$ can be defined (in a field of characteristic zero) as follows:
$$\rho(a)=\left(\begin{array}{ccc}1 & 0 & 0\\ 0 & 1 & 0 \\ 0 & 0 & 0\end{array}\right),\;\; \rho(b)=\left(\begin{array}{ccc}0 & 0& 0\\ 0 & 1 & 0\\ 0 & 0 & 1\end{array}\right),\;\; \rho(c_1)=\left(\begin{array}{ccc}2 & 0 & 0 \\0 & -1 & 0 \\0 & 0 & 2 \end{array}\right).$$
\end{exmp}

\begin{thm}\label{thm1}
No representation $\rho: SSB_n\to M_3(K)$ for $n\geqslant 3$ is faithful.
\end{thm}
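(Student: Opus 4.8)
The plan is to argue by contradiction, assuming a faithful representation $\rho\colon SSB_n\to M_3(K)$ exists for some $n\geqslant 3$, and to derive a contradiction with one of the relations (p1)--(p12) of Prop.\;\ref{prop6}. First I would replace $\rho$ by the normalized representation provided by Prop.\;\ref{prop6}, so that $\rho(a)=I_s\oplus 0_{3-s}$ with $s\in\{1,2\}$. Proposition \ref{prop4} already kills the case $\mathrm{rank}(\rho(a))=1$, i.e.\ $s=1$; moreover, since $a$ and $b$ play symmetric roles in $SSB_n$, the same argument (after a further change of basis conjugating $\rho(b)$ to a standard idempotent form, which Lem.\;\ref{lem1} permits) rules out $\mathrm{rank}(\rho(b))=1$. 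Because $\rho(b)\notin\{0_3,I_3\}$ by (p2), and $\rho(b)$ is idempotent of rank $0,1,2$ or $3$, the only surviving case is $\mathrm{rank}(\rho(a))=\mathrm{rank}(\rho(b))=2$. So after normalization we may assume
$$
\rho(a)=\begin{pmatrix}1&0&0\\0&1&0\\0&0&0\end{pmatrix},
$$
and $\rho(b)$ is some rank-$2$ idempotent $3\times3$ matrix, not equal to $\rho(a)$.

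Next I would exploit the commuting idempotents. From (p1)--(p5) and relation (m11) we have $\rho(a)\rho(b)=\rho(b)\rho(a)$ with $\rho(a),\rho(b)$ both rank-$2$ idempotents, $\rho(a)\neq\rho(b)$, $\rho(a)\rho(b)\neq\rho(a)$, $\rho(a)\rho(b)\neq\rho(b)$. Two commuting diagonalizable matrices are simultaneously diagonalizable, so in a suitable basis $\rho(a)$ and $\rho(b)$ are both diagonal with $0/1$ entries; rank $2$ means each has exactly one $0$ on the diagonal, in \emph{different} positions (else they would be equal or one would equal their product). Thus, up to relabeling coordinates, $\rho(b)=\mathrm{diag}(1,0,1)$ and $\rho(a)\rho(b)=\mathrm{diag}(1,0,0)$, which has rank $1$. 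Now bring in relation (m14): $ac_1b=ac_1^{-1}b$, so $\rho(a)\rho(c_1)\rho(b)=\rho(a)\rho(c_1)^{-1}\rho(b)$. Writing $C=\rho(c_1)\in GL_3(K)$, the product $\rho(a)\,X\,\rho(b)$ for any $X$ depends only on the top-left-ish block picked out by these rank-$1$-supported projections; concretely, $\rho(a)X\rho(b)$ is the matrix whose only possibly nonzero entry is the $(1,1)$ entry, equal to $X_{1,1}$ (using the simultaneously-diagonalized coordinates). Hence (m14) forces $C_{1,1}=(C^{-1})_{1,1}$, and similarly (m15) and (m16), when $n$ is large enough to make sense of them, give analogous scalar equalities for $\rho(c_1c_2c_1)$ and $\rho(c_2c_3c_1c_2)$.

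The heart of the argument is then to extract, from these scalar constraints together with the braid relations (m1)--(m3), (m6)--(m9) and the commutation relations (m4),(m5), enough rigidity on $C=\rho(c_1)$ and $\rho(c_2)$ to contradict one of (p7)--(p11) --- most plausibly (p11) $\rho(a)\rho(c_1)^2\neq\rho(a)$ or (p7) $\rho(a)\rho(c_2)\neq\rho(c_2)\rho(a)$. The key structural fact I would try to establish is that, in the simultaneously-diagonalized coordinates for $\rho(a),\rho(b)$, the relations $ac_i=c_ia$ for $i\neq 2$ and $bc_i=c_ib$ for $i\neq 2$ (here only $c_1$ is affected since $n\geqslant 3$ means $c_2$ exists, but $c_1$ commutes with both $a$ and $b$ only if... wait --- note (m4),(m5) say $ac_1=c_1a$ and $bc_1=c_1b$) force $\rho(c_1)$ to be \emph{diagonal} in this same basis, because a matrix commuting with both $\mathrm{diag}(1,1,0)$ and $\mathrm{diag}(1,0,1)$ must be diagonal. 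But then $\rho(c_1)$ is diagonal and invertible, so $\rho(a)\rho(c_1)^2=\rho(a)$ would hold iff the first two diagonal entries of $\rho(c_1)$ square to $1$; relation (m14) with the $(1,1)$-entry computation forces $C_{1,1}=C_{1,1}^{-1}$, i.e.\ $C_{1,1}^2=1$, and (m15)/(m16) or the symmetric diagonal computation with $\rho(b)$ in the $(3,3)$ slot should force $C_{3,3}^2=1$ and $C_{2,2}^2=1$ as well --- whence $\rho(c_1)^2=I_3$, so $\rho(c_1)^2=\rho(c_1)^{-2}$ and in particular $\rho(a)\rho(c_1)^2=\rho(a)$, directly contradicting (p11). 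The main obstacle I anticipate is pinning down \emph{all three} diagonal entries of $\rho(c_1)$: the single relation (m14) only controls the $(1,1)$ entry through the $\rho(a)\cdots\rho(b)$ sandwich, so I will need to feed in the braid relation (m3) $c_1c_2c_1=c_2c_1c_2$ together with (m4),(m5) applied to $c_2$-containing words (via (m6)--(m9), (m15), (m16)) to propagate the constraint to the other diagonal entries; making this propagation airtight, rather than just plausible, is where the real work lies.
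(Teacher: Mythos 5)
Your normalization is sound and in fact a little cleaner than the paper's: Prop.~\ref{prop4} does rule out rank one for both $\rho(a)$ and $\rho(b)$, commuting idempotents are simultaneously diagonalizable, so you may take $\rho(a)=\mathrm{diag}(1,1,0)$ and $\rho(b)=\mathrm{diag}(1,0,1)$, and then (m4), (m5) with $i=1$ force $\rho(c_1)$ to be diagonal, with (m14) forcing the entry in the coordinate where both $\rho(a)$ and $\rho(b)$ act as $1$ to square to $1$. Up to a change of basis this is precisely where the paper arrives at the end of its preliminary reductions ($B=G\oplus(1)$, $C_1=F\oplus(c_{3,3})$ with $F$ commuting with the rank-one idempotent $G$). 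One small slip: for a non-diagonal $X$ such as $\rho(c_1c_2c_1)$, the sandwich $\rho(a)X\rho(b)$ is not supported on a single entry (it retains the rows of $\rho(a)$'s image and the columns of $\rho(b)$'s image), so (m15), (m16) yield several scalar equations, not one.

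The genuine gap is everything after this setup, which is where the entire content of the theorem lives. Your proposed endgame --- forcing $\rho(c_1)^2=I_3$ so as to contradict (p11) --- is unsubstantiated, and the paper's own analysis indicates it is the wrong target: in its Case~(a) the paper derives from (p11), (p12) that the two diagonal entries of $\rho(c_1)$ \emph{outside} the common $1$-slot satisfy $c_{1,1}^2\neq 1$ and $c_{3,3}^2\neq 1$, and these remain consistent with all relations involving only $a,b,c_1$ (compare the faithful $3$-dimensional representation of $SSB_2$ in the Example, where $\rho(c_1)=\mathrm{diag}(2,-1,2)$). A contradiction can therefore only come from introducing $\rho(c_2)$ and confronting (m1), (m3), (m6), (m7), (m10), (m15) with the non-relations (p7)--(p10); this is the paper's multi-page Appendix of nested case distinctions on the entries of $\rho(c_2)$, and the contradictions it finds are of a different shape (e.g.\ forcing $c_{2,2}^3=1$ and $c_{3,3}^3=1$ against $(\rho(c_1)\rho(c_2)\rho(c_1))^2\neq I_3$, or collapsing an entry of $\rho(c_2)$ to $0$ against invertibility). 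You explicitly defer this ``propagation'' step as ``where the real work lies''; since no argument is given that it can be carried out, let alone in the specific form you predict, the proof is incomplete at its decisive point.
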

\begin{proof}
Assume the contrary, that $\rho$ is faithful and denote $X:=\rho(x)$ for $x\in\{a,b,c_1\}$. From the relations (m11) and (m13) of Prop.\;\ref{prop3} we have $B^2=B$ and $AB=BA$. From Prop.\;\ref{prop6} we can assume that $B\not\in\{0_3,I_3\}$, $AB\not=A$, $AB\not=B$, $ABC_1\not=AB$ and that $A=I_2\oplus 0_1$. Let $B=(b_{i,j})_{i,j\in\{1,2,3\}}$. Then from the relation $AB=BA$ it follows that $B=G\oplus(b_{3,3})$ for some matrix $G\in M_2(K)$. From the relation $B^2=B$ it follows that $G^2=G$ and $b_{3,3}\in\{0,1\}$. If $b_{3,3}=0$ then $AB=G\oplus0_1=B$, a contradiction, so it follows that $b_{3,3}=1$, and combining it with $B\not=I_3$ gives moreover $\text{det}{G}=0$. 

Consider now $C_1=(c_{i,j})_{i,j\in\{1,2,3\}}$, from the the relation (m4) it follows that $C_1=F\oplus(c_{3,3})$ for some matrix $F\in M_2(K)$. Non-invertability of matrix $G$ together with the relation $ABC_1\not=AB$ implies that $G\not\in\{0_2,I_2\}$ and $GF\not=G$, additionally from the relations (m1), (m5) and (m14) we have $\text{det}{F}\not=0$, $GF=FG$ and $GF^2=G$. Consider the following two main cases.

{\bf Case (a)}. Assume $b_{1,1}b_{2,2}b_{1,2}b_{2,1}=0$. Then by $\text{det}{G}=0$ we must have that $b_{1,2}b_{2,1}=0$. From the symmetric role of $b_{1,2}$ and $b_{2,1}$, without loss of generality, assume $b_{1,2}=0$. Then from $G^2=G$ and $G\not\in\{0_2,I_2\}$ we obtain that $G$ is one of the two possible forms: $\left(\begin{array}{cc}0 & 0\\ b_{2,1} & 1\end{array}\right)$ or $\left(\begin{array}{cc}1 & 0\\ b_{2,1} & 0\end{array}\right)$. From $GF=FG$ it follows that $c_{1,2}=0$, and the relation $GF^2=G$ together with $GF\not=G$ yields that $F$ is one of the two possible forms $\left(\begin{array}{cc}c_{1,1} & 0\\-b_{2,1}(1+c_{1,1}) & -1\end{array}\right)$ or $\left(\begin{array}{cc}-1 & 0\\-b_{2,1}(1+c_{2,2}) & c_{2,2}\end{array}\right)$ one for each mentioned type of $G$ respectively. Additionally from (m1) and (p11) we have that $c_{1,1}\not=0$, $c_{1,1}^2\not=1$ in the first and $c_{2,2}\not=0$, $c_{2,2}^2\not=1$ in the second case respectively. From (m1) and (p12) we moreover have $c_{3,3}\not=0$ and $c_{3,3}^2\not=1$ in both cases.

{\bf Case (b)}. Assume $b_{1,1}b_{2,2}b_{1,2}b_{2,1}\not=0$. Then from the relations (m1), (m4), (m5), (m11), (m13), (m14), (p2), (p4)-(p6), (p11) and (p12) it follows that $B,C_1$ are in the form: 
$$B=\left(\begin{array}{cc}b_{1,1}&b_{1,2}\\\frac{(1-b_{1,1})b_{1,1}}{b_{1,2}}&1-b_{1,1}\end{array}\right)\oplus I_1,$$$$ C_1=\left(\begin{array}{cc}\frac{(b_{1,1}-1)c_{1,2}-b_{1,2}}{b_{1,2}}&c_{1,2}\\\frac{(1-b_{1,1})b_{1,1}c_{1,2}}{b_{1,2}^2}&-\frac{b_{1,1}c_{1,2}}{b_{1,2}}-1\end{array}\right)\oplus(c_{3,3}),$$
for $b_{1,1}b_{1,2}c_{1,2}c_{3,3}\not=0$, $c_{1,2}\not=-b_{1,2}$, $c_{1,2}\not=-2b_{1,2}$, $b_{1,1}\not=1$ and $c_{3,3}^2\not=1$.

Introducing now a matrix $\rho(c_2)$ and making simple, but tedious computations (summarized in the following Appendix) we can show that in both of the above cases, the relations (m1), (m3), (m6), (m7), (m10), (m15), (p7), (p8) and (p10) form a self-contradictory set.

\end{proof}

\begin{remark}
For $n\in \mathbb{Z}$ and $n>1$ we define monoid $\overline{SSB_n}$ by the following presentation:
$$\overline{SSB_n}=\left<a,b \text{ and } c_i,c_i^{-1} \text{ for } i=1,\ldots,n-1 \;|\; (m1)-(m16), R\right>,$$
where $R$ is any finite sequence of relations, on generators of $\overline{SSB_n}$, that follow from the topological Yoshikawa moves.
\begin{itemize}
	\item[(i)] In $\overline{SSB_n}$ relations \emph{(e1)-(e7)} must hold.
  \item[(ii)] If a representation $\rho: \overline{SSB_n}\to M_m(K)$ for $n,m\geqslant 2$ satisfies $\text{rank}(\rho(a))=1$ or $\text{rank}(\rho(b))=1$ then $\rho$ is not faithful.
	\item[(iii)] No representation $\rho: \overline{SSB_n}\to M_2(K)$ for $n\geqslant 2$ is faithful.
	\item[(iv)] No representation $\rho: \overline{SSB_n}\to M_3(K)$ for $n\geqslant 3$ is faithful.
\end{itemize}
\end{remark}

\newpage

\section*{Appendix}
\noindent
Let $\rho(c_2)=(d_{i,j})_{i,j\in\{1,2,3\}}$, we consider further subcases.
\\
{\bf Case (a1)}. Assume $b_{1,1}=0$, $b_{2,2}=1$, $c_{2,2}=-1$ and $c_{2,1}=-b_{2,1}(1+c_{1,1})$. Consider further two cases.
\\
{\bf Case (a1a)}. Assume $d_{3,3}=0$. Consider further two cases.
\\
{\bf Case (a1a1)}. Assume $d_{3,2}=0$. From (m1) and (m3) we have $d_{3,1}\not=0$, $d_{1,1}=c_{3,3}$, $d_{1,2}=0$ and $d_{1,3}=0$, a contradiction with (m1).
\\
{\bf Case (a1a2)}. Assume $d_{3,2}\not=0$. From (m1) and (m10) we have $d_{3,1}=b_{2,1} d_{3,2}$, now (m3) with (m6) contradict (m1).
\\
{\bf Case (a1b)}. Assume $d_{3,3}\not=0$. Consider further two cases.
\\
{\bf Case (a1b1)}. Assume $d_{3,2}=0$. From (m1), (m3) and (m10) we have $d_{3,1}=0$, now (m1) with (m3) yield $d_{3,3}=c_{3,3}$. Consider further two cases.
\\
{\bf Case (a1b1a)}. Assume $d_{1,2}=0$. From (m10) we have $d_{1,3}=0$, from (m3) it follows that $d_{1,1}=c_{1,1}$ and $d_{2,2}=-1$. Now (m6) with (p7) contradict (p10).
\\
{\bf Case (a1b1b)}. Assume $d_{1,2}\not=0$. From (m15) we have $d_{2,3}=\frac{1}{d_{1,2}}(d_{1,3}d_{2,2}-c_{3,3}^3d_{1,3})$ and $d_{2,1}=\frac{1}{d_{1,2}}(\frac{1}{c_{1,1}^2}+d_{1,1}d_{2,2})$. From (p7) we have $d_{1,3}\not=0$, now (m3) with (m6) contradict (m15).
\\
{\bf Case (a1b2)}. Assume $d_{3,2}\not=0$. From (m6) we have $d_{2,1}=b_{2,1}c_{1,1}^2d_{1,1}-b_{2,1}d_{1,1}-\frac{c_{1,1}^2d_{1,1}d_{3,1}}{d_{3,2}}-\frac{c_{3,3}^2 d_{3,1}d_{3,3}}{d_{3,2}}$ and $d_{2,2}=b_{2,1}c_{1,1}^2d_{1,2}-b_{2,1}d_{1,2}-\frac{c_{1,1}^2d_{1,2}d_{3,1}}{d_{3,2}}-c_{3,3}^2d_{3,3}$. From (m3) we have $d_{2,3}=\frac{1}{d_{3,2}}(-b_{2,1}c_{1,1}d_{1,3}d_{3,2}-b_{2,1}d_{1,3}d_{3,2}-c_{3,3}^2d_{3,3}+c_{3,3} d_{3,3}^2+c_{1,1} d_{1,3} d_{3,1})$. Consider further two cases.
\\
{\bf Case (a1b2a)}. Assume $d_{3,1}=0$. Form (m1) and (m7) we have $d_{1,3}=\frac{d_{1,2}}{d_{3,2}}(d_{3,3}-\frac{c_{1,1}^2d_{1,1}}{c_{3,3}^2})$. From (m3) and (m10) we obtain $d_{3,3}=\frac{-1}{c_{3,3}+1}$ and $d_{1,1}=c_{1,1}$. From (m3), (m6), (m10) and (p8) we have $b_{2,1}=0$, $c_{1,1}^3=1$ and $c_{3,3}^3=1$, a contradiction with (p10).
\\
{\bf Case (a1b2b)}. Assume $d_{3,1}\not=0$. Consider further two cases.
\\
{\bf Case (a1b2b1)}. Assume $d_{1,2}=0$. From (m3) we have $d_{1,3}=0$, $d_{1,1}=c_{1,1}$ and $d_{3,3}=\frac{-1}{c_{3,3}+1}$. Now (m15) contradicts (p10).
\\
{\bf Case (a1b2b2)}. Assume $d_{1,2}\not=0$. Form (m1), (m7) and (m10) we have $b_{2,1}=\frac{d_{3,1}}{d_{3,2}}$ and $d_{1,1}=-\frac{c_{3,3}^2d_{1,3}d_{3,2}}{c_{1,1}^2d_{1,2}}+\frac{c_{3,3}^2d_{3,3}}{c_{1,1}^2}+\frac{d_{1,2} d_{3,1}}{d_{3,2}}$. From (m3) we have $d_{3,3}=\frac{-1}{c_{3,3}+1}$, and (m15) yields $c_{3,3}^2+c_{3,3}=-1$. This together with (m10) implies $d_{1,3}=0$, a contradiction with (p10).
\\
{\bf Case (a2)}. Assume $b_{1,1}=1$, $b_{2,2}=0$, $c_{1,1}=-1$ and $c_{2,1}=-b_{2,1}(1+c_{2,2})$. Consider further two cases.
\\
{\bf Case (a2a)}. Assume $d_{3,2}=0$. Consider further two cases. 
\\
{\bf Case (a2a1)}. Assume $d_{1,2}=0$. From (m1) and (m3) we have $d_{2,2}=c_{2,2}$. Consider further two cases. 
\\
{\bf Case (a2a1a)}. Assume $d_{1,3}=0$. From (m1) and (m3) we have $d_{1,1}=-1$, $d_{3,3}=c_{3,3}$, from (m10) we obtain $d_{2,3}=0$. From (p7) and (m6) we obtain $c_{3,3}^3=1$, from (p8) and (m7)  we obtain $c_{2,2}^3=1$, a contradiction with (p10). 
\\
{\bf Case (a2a1b)}. Assume $d_{1,3}\not=0$. From (m3) we have $d_{1,1}=c_{3,3}(d_{3,3}+1)$ and $d_{3,1}=-\frac{c_{3,3}d_{3,3}(c_{3,3}-d_{3,3})}{d_{1,3}}$, from (m6) we have $d_{3,3}=\frac{-1}{c_{3,3}+1}$ and $d_{1,1}=\frac{c_{3,3}^2}{c_{3,3}+1}$. From (m15) we have $c_{3,3}^3=1$, then from (m6) and (m10) we have $d_{2,1}=b_{2,1}(-c_{2,2}+c_{3,3}+\frac{1}{c_{3,3}+1}-1)$ and $d_{2,3}=b_{2,1} d_{1,3}$, a contradiction with (p8).
\\
{\bf Case (a2a2)}. Assume $d_{1,2}\not=0$. From (m6) we have $d_{3,1}=0$, and (m7) yields $d_{1,1}=b_{2,1} c_{2,2}^2 d_{1,2}-b_{2,1} d_{1,2}-c_{2,2}^2 d_{2,2}$. From (m1) and (m3) it follows that $d_{3,3}=c_{3,3}$, from (m6) we have $d_{2,3}=\frac{c_{2,2}^2d_{1,3}d_{2,2}-c_{3,3}^3d_{1,3}}{c_{2,2}^2d_{1,2}}.$ From (p7) we have $d_{1,3}\not=0$, from (m6) it follows that $d_{2,1}=\frac{b_{2,1} c_{2,2}^4d_{1,2}d_{2,2}-b_{2,1}c_{2,2}^2d_{1,2}d_{2,2}-c_{2,2}^4d_{2,2}^2+c_{3,3}^6}{c_{2,2}^2 d_{1,2}}.$ From (m15) it follows that $c_{2,2}^3=1$, a contradiction with (p10).
\\
{\bf Case (a2b)}. Assume $d_{3,2}\not=0$. Then from (m3) we have the relations\\
$d_{2,1}=\frac{d_{1,1} \left(b_{2,1} \left(c_{2,2}+1\right) d_{3,2}+d_{3,1}\right)-c_{3,3} \left(b_{2,1} \left(c_{2,2}+1\right) d_{3,2}+d_{3,1} \left(d_{3,3}+1\right)\right)}{c_{2,2} d_{3,2}}$, \\
$d_{2,2}=\frac{d_{1,2} \left(b_{2,1} \left(c_{2,2}+1\right) d_{3,2}+d_{3,1}\right)+c_{3,3} d_{3,2} \left(c_{2,2}-d_{3,3}\right)}{c_{2,2} d_{3,2}}$ and \\
$d_{2,3}=\frac{d_{1,3} \left(b_{2,1} \left(c_{2,2}+1\right) d_{3,2}+d_{3,1}\right)+c_{3,3} d_{3,3} \left(c_{3,3}-d_{3,3}\right)}{c_{2,2} d_{3,2}}.$ Consider further two cases. 
\\
{\bf Case (a2b1)}. Assume $d_{3,1}=0$. Consider further two cases. 
\\
{\bf Case (a2b1a)}. Assume $b_{1,2}=0$. Then (m10) implies $c_{2,2}=0$, a contradiction with (m1). 
\\
{\bf Case (a2b1b)}. Assume $b_{1,2}\not=0$.  Then (m10) implies $d_{1,3}=\frac{b_{2,1} d_{1,2} d_{3,3}+d_{1,1} d_{3,3}}{b_{2,1} d_{3,2}}$ again from (m10) we obtain $d_{3,2}=0$, a contradiction. 
\\
{\bf Case (a2b2)}. Assume $d_{3,1}\not=0$. Consider further two cases. 
\\
{\bf Case (a2b2a)}. Assume $c_{2,2}=c_{3,3}$. Then (m1) and (m15) yields $d_{1,1}=\frac{-b_{2,1} c_{2,2}^2 d_{1,2}-b_{2,1}c_{2,2}d_{1,2}+1}{c_{2,2}(c_{2,2}+1)}$, later from (m10) we obtain\\ $b_{2,1}=\frac{c_{2,2}^2 \left(-d_{1,3}\right) d_{3,1}-c_{2,2} d_{1,3} d_{3,1}+d_{3,3}}{\left(c_{2,2}^2+c_{2,2}\right) d_{1,3} d_{3,2}}$. Again from (m10) we obtain $d_{3,3}=0$, a contradiction with (m1). 
\\
{\bf Case (a2b2b)}. Assume $c_{2,2}\not=c_{3,3}$. Then from (m6) we have \\
$d_{3,3}=\frac{\left(c_{2,2}+1\right) d_{1,2} \left(b_{2,1} d_{3,2}+d_{3,1}\right)+c_{3,3} c_{2,2}^2 d_{3,2}}{\left(c_{2,2}-c_{3,3}\right) c_{3,3} d_{3,2}},$ from (m3) we can obtain\\
$d_{1,3}=\frac{d_{1,2}(d_{1,2}(b_{2,1}(c_{2,2}+1)d_{3,2}+(c_{3,3}+1)d_{3,1})+d_{3,2}(c_{2,2}(c_{3,3}^2+c_{3,3}+d_{1,1})-c_{3,3} d_{1,1}-c_{2,2}^2))}{(c_{2,2}-c_{3,3})c_{3,3}d_{3,2}^2}.$\\ Then from (m3) we have $d_{3,1}=-b_{2,1} \left(c_{2,2}+1\right) d_{3,2}.$ Consider further two cases. 
\\
{\bf Case (a2b2b1)}. Assume $b_{1,2}=0$. Then (m10) implies $c_{2,2}=0$, a contradiction with (m1). 
\\
{\bf Case (a2b2b2)}. Assume $b_{1,2}\not=0$. Then (m10) implies $b_{2,1}=\frac{c_{3,3}^2}{\left(c_{2,2}+1\right) d_{1,2}}$, again from (m10) we have $c_{3,3}=-1$, a contradiction.
\\
{\bf Case (b1)}. Assume $d_{3,3}=0$. Consider further two cases.
\\
{\bf Case (b1a)}. Assume $d_{3,2}=0$. From (m1) we have $d_{3,1}\not=0$, from (m10) we have $d_{2,3}=0$ and $d_{1,3}=0$, a contradiction with (m1).
\\
{\bf Case (b1b)}. Assume $d_{3,2}\not=0$. From (m1) and (m10) we have $d_{3,1}=\frac{\left(b_{1,1}-1\right)d_{3,2}}{b_{1,2}}$ or $d_{3,1}=\frac{b_{1,1}d_{3,2}}{b_{1,2}}$, in both of those cases (m3) contradicts (m1).
\\
{\bf Case (b2)}. Assume $d_{3,3}\not=0$. From (m10) we have $d_{2,1}=\frac{\left(b_{1,1}-1\right)d_{2,2}}{b_{1,2}}$, $d_{3,2}=\frac{b_{1,2}d_{3,1}}{b_{1,1}}$ and $d_{2,3}=0$. From (m1) we have $d_{2,2}\not=0$ then (m6) yields $d_{1,3}=0$, the relation (p7) yields $d_{3,1}\not=0$, and (m3) yields $d_{3,3}=c_{3,3}$. Now (m3) with (m6) contradict (p10).

\end{document}